\documentclass[12pt]{article}

\usepackage{amsmath, amsthm, amssymb}  
\usepackage{graphicx}                  
\usepackage{geometry}                  
\usepackage{enumerate}
\usepackage{booktabs}                 
\usepackage{cite}                     
\usepackage{mathrsfs}                 
\usepackage{float}                    
\usepackage[ruled,vlined,linesnumbered]{algorithm2e}
\SetKwInput{KwIn}{Input}
\SetKwInput{KwOut}{Output}
\SetKwComment{tcp}{\% }{}

\usepackage{subcaption} 
\usepackage{enumitem}
\usepackage[colorlinks=true,linkcolor=blue,citecolor=magenta]{hyperref}
\usepackage{booktabs, tabularx}
\usepackage{tcolorbox}
\usepackage{authblk}

\usepackage{optidef}   

\usepackage{pdflscape} 
\usepackage{rotating}     
\usepackage{caption}
\title{\textbf{A Generalized Heron--Waist Problem: Optimality Conditions and Convergence Analysis}}

\author{
	Manohar Choudhary\thanks{
		Department of Mathematics and Statistics,
		Dr.~Hari Singh Gour Vishwavidyalaya, Sagar, India.
		E-mail: \texttt{manoharfbg@gmail.com}
	}
	\;\;and\;\;
	Triloki Nath\thanks{
		Department of Mathematics and Statistics,
		Deen Dayal Upadhyaya Gorakhpur University, Gorakhpur, India.
		E-mail: \texttt{tnverma07@gmail.com}
	}
	\;\;and\;\;
	Ram K.~Pandey\thanks{
		Department of Mathematics and Statistics,
		Dr.~Hari Singh Gour Vishwavidyalaya, Sagar, India.
		E-mail: \texttt{pandeywavelet@gmail.com}
	}
}

\date{}

\theoremstyle{plain}
\newtheorem{theorem}{Theorem}[section]

\newtheorem{proposition}[theorem]{Proposition}
\newtheorem{corollary}[theorem]{Corollary}

\theoremstyle{definition}
\newtheorem{definition}[theorem]{Definition}
\newtheorem{remark}[theorem]{Remark}
\newtheorem{example}[theorem]{Example}
\DeclareMathOperator{\proj}{proj}

\newtheorem{assumption}{Assumption}

\usepackage{rotating}  
\usepackage{booktabs}  

\begin{document}
	
	\maketitle

\begin{abstract}
	This paper introduces and solves the Generalized Heron–Waist Problem (GHWP),  that integrates the classical Heron problem of optimal hub location and the waist problem of minimal-perimeter configuration. The GHWP seeks an optimal closed polygonal chain with weights whose vertices are constrained to lie in the given nonempty, closed, and convex sets, while simultaneously minimizing weighted distances to a central hub point. This coupled formulation naturally models systems in which cyclic internal connectivity and radial access to a hub must be optimized jointly—a structural feature that arises in applications such as supply-chain design, transportation planning, and communication infrastructures. Using modern convex analysis tools, we establish existence of optimal solutions under boundedness and general position assumptions of sets, we prove uniqueness when constraint sets are strictly convex with positive weights. We also derive first order necessary and sufficient optimality conditions using subdifferential calculus. For computation, we develop a Projected Subgradient Algorithm (PSA) and we prove convergence of the best-iterate sequence under classical diminishing step size rules. Numerical illustrations in $\mathbb{R}^2$ and $\mathbb{R}^3$ are provided to validate the algorithm's robustness across diverse geometries and weighting schemes.
\end{abstract}

	\noindent\textbf{Keywords:} Convex Optimization, Heron Problem, Waist Problem, Facility Location, Subdifferential Calculus, Projected Subgradient Algorithm, Geometric Optimization.
	
	\noindent\textbf{AMS Subject Classification:} 51M04, 90C25, 90B85, 52A41, 65K05.
	
\section{Introduction}

Geometric optimization problems, which seek to minimize distances, perimeters, or related quantities subject to spatial constraints, have fascinated mathematicians for centuries. One of the earliest and most celebrated distance-minimization problems is the classical \emph{Heron problem}, attributed to Heron of Alexandria \cite[p. 304]{GiMo12}.

In its classical form, the problem asks the following: given two points $P$ and $Q$ located on the same side of a straight line $\ell$ in the plane, find a point $M$ on $\ell$ that minimizes the sum of distances $|M - P| + |M - Q|$. Its elegant solution is governed by a geometric reflection principle, which reveals that the optimal point $M$ satisfies an \emph{equal-angle condition}: the line segments $\overline{MP}$ and $\overline{MQ}$ form equal angles with $\ell$, mirroring the law of reflection in optics.

This foundational problem inspired numerous generalizations over subsequent centuries. In the seventeenth century, Pierre de Fermat posed what is now known as the \emph{Fermat--Torricelli problem}: find a point in the plane that minimizes the sum of distances to three given points. Evangelista Torricelli provided its solution, establishing that when all angles of the triangle formed by the three points are less than $120^\circ$, the minimizer lies in the interior of the triangle and forms $120^\circ$ angles with each pair of connecting segments \cite{Spindler2025}. Later, Endre Weiszfeld \cite{WePl09,We37} extended this framework to $n$ points by introducing an iterative algorithm that bears his name and remains influential in modern facility location theory.

\subsection{Modern Generalizations via Convex Analysis}

The development of convex analysis in the mid-twentieth century, pioneered by Rockafellar \cite{Ro70}, Moreau, and others, provided powerful tools for revisiting classical geometric problems in a broadly applicable convex analysis framework.
	
Motivated by applications in location science and optimization, Mordukhovich, Nam, and Salinas \cite{Mordukhovich2012} introduced a convex-analysis generalization of the classical Heron problem by replacing points and lines with nonempty closed convex sets. In this framework, they formulated and solved the \emph{generalized Heron problem (GHP)}:
\begin{equation}\label{GHP}
	\min_{x \in S} D(x) := \sum_{i=1}^{n} d(x; C_i),
\end{equation}
where $S$ and $C_i$, $i = 1, \ldots, n$, are nonempty closed convex subsets of $\mathbb{R}^s$, and
\[
d(x; C_i) := \inf \{ \|x - y\| : y \in C_i \}
\]
denotes the Euclidean distance from the point $x$ to the set $C_i$.

\begin{figure}[H]
	\centering
	\includegraphics[scale=0.15]{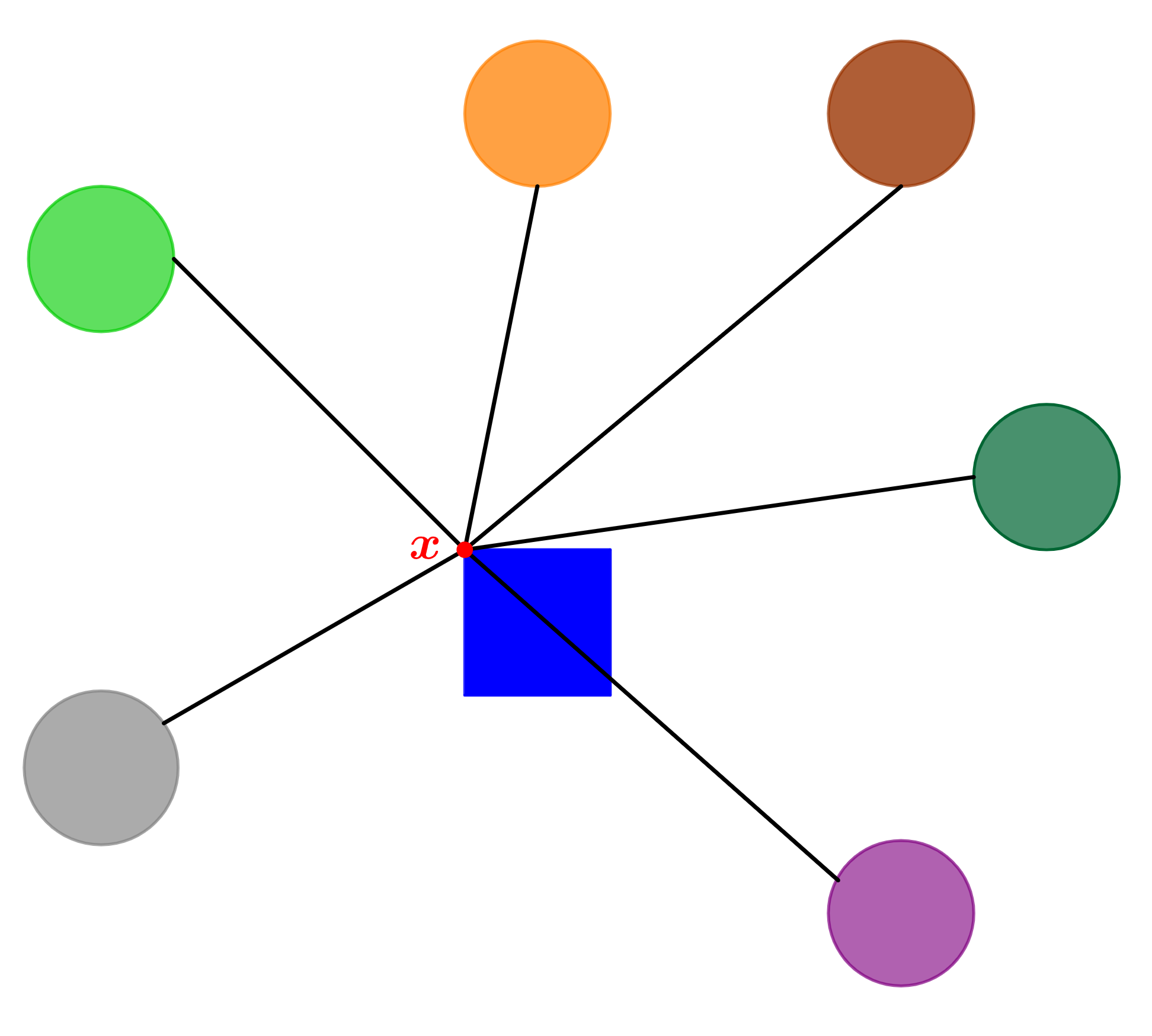}
	\caption{Illustration of the Generalized Heron Problem (GHP).}
	\label{fig:GHP_Illustration}
\end{figure}

This formulation replaces the classical constraint line with a closed convex feasible region $S$ and points  with convex target sets $C_i$, thereby substantially broadening the geometric and practical scope of the problem. Using tools from subdifferential calculus, the authors derived complete necessary and sufficient optimality conditions and developed convergent numerical algorithms based on projected subgradient methods. From an applied perspective, the generalized Heron problem models the search for an optimal hub location within a convex region that minimizes the total sum of distances to multiple convex sets. This development represents a fundamental transition of the Heron problem from classical geometric constructions to the modern theory of nonsmooth convex optimization. Recently, we further extended this problem to multiple hub sets \cite{Nath2025}. 
	
Parallel to the Heron problem, another classical geometric optimization model is the \emph{waist problem}, originally proposed by John Tyrrell \cite[p. 223]{BrTi11}. The waist problem seeks a triangle of minimal perimeter whose vertices lie on three given, pairwise disjoint lines in space. Geometrically, it can be interpreted as an elastic band wrapped around rigid supports, contracting to a configuration of minimal total length. From an optimization perspective, the waist problem may also be viewed as a \emph{facility-location--type problem of distributed nature}: rather than connecting facilities to a single hub, it optimizes the cyclic connectivity among the facilities themselves. Convexity plays a crucial role in this setting, ensuring the existence and uniqueness of solutions as well as equilibrium configurations. The optimal triangle satisfies reflection-type angle conditions that closely parallel those arising in the classical Heron problem, see \cite[p. 304]{GiMo12}.

Building on this classical foundation, We \cite{NaCh25Waist} recently introduced the \emph{generalized waist problem (GWP)}, which extends the waist problem by replacing the lines with closed convex sets. The resulting optimization model is formulated as
\begin{equation}\label{Waist}
	\min_{a \in C} D(a) := \sum_{i=1}^{m} \|a_i - a_{i+1}\|, 
	\qquad a_{m+1} := a_1, 
\end{equation}
where
\[
C := C_1 \times C_2 \times \cdots \times C_m,
\]
with each $C_i \subset \mathbb{R}^n$ being nonempty, closed, and convex. Here, the decision variable $a = (a_1, \ldots, a_m) \in C$ represents a closed polygonal chain whose vertices are constrained to lie in their respective convex sets. This generalization significantly broadens the geometric scope of the waist problem and places it naturally within the modern convex-analysis optimization framework.

\begin{figure}[H] 
	\centering	
	\includegraphics[scale=.12]{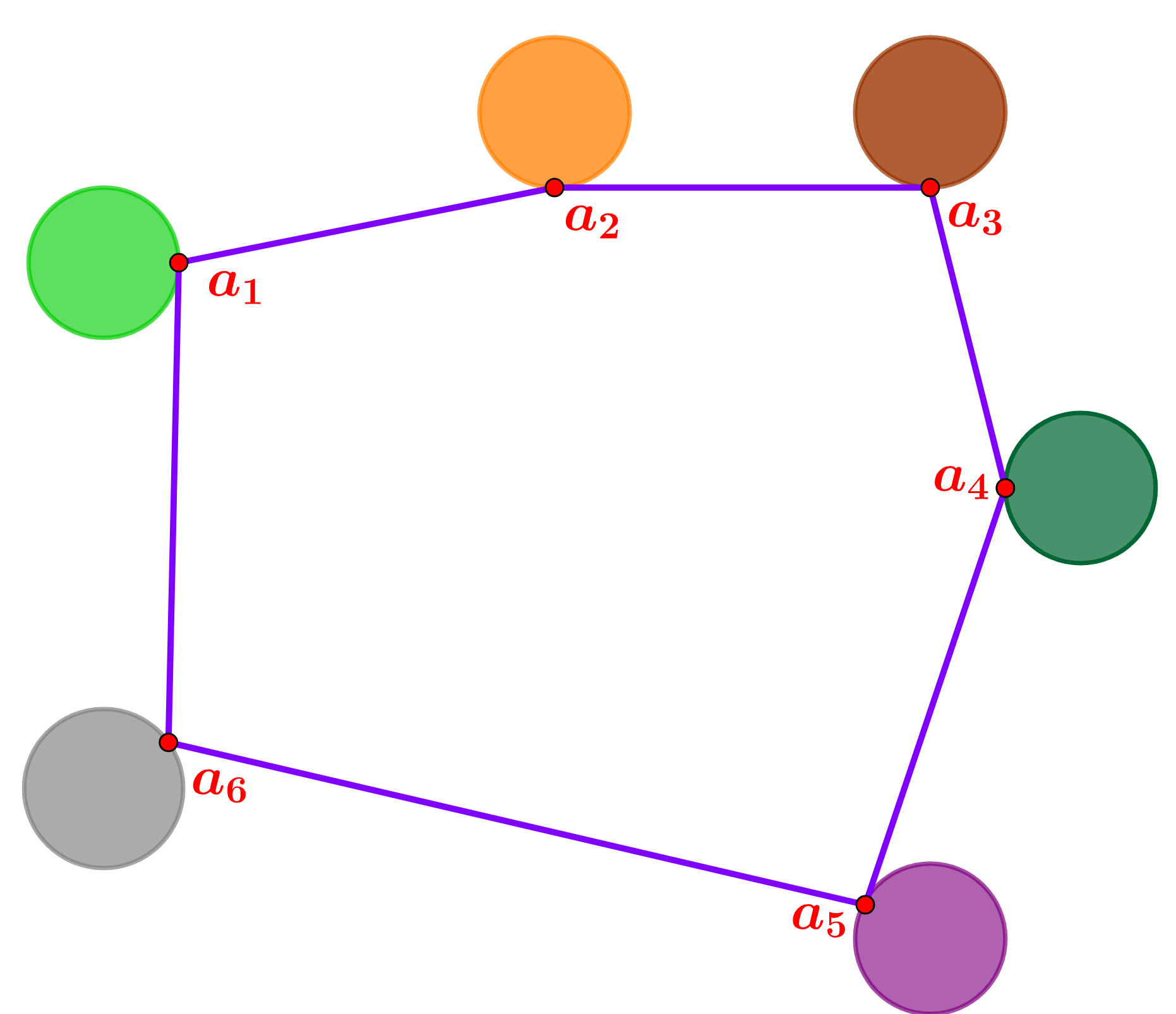} 
	\caption{Illustration of the generalized waist problem in $\mathbb{R}^2$.}
\end{figure}

Despite these advances, existing models continue to treat \emph{hub-based connectivity} (as in Heron-type problems) and \emph{cyclic connectivity} (as in waist-type problems) in isolation. In contrast, many real-world systems inherently require the \emph{simultaneous optimization of internal network connectivity and external access to a central facility}. Typical examples include supply-chain loops connected to distribution hubs, transportation cycles linked to depots, sensor networks coordinated by a control unit, and communication backbones accessing centralized servers. Such systems cannot be adequately captured by either Heron-type or waist-type formulations alone.

Motivated by this observation, we propose a \emph{hybrid geometric optimization framework} that integrates both paradigms within a single model. Moreover, practical applications demand \emph{heterogeneity}: different connections may lead to different costs, priorities, or levels of importance. To reflect this reality, we introduce heterogeneous positive weights that allow each interaction---whether between neighboring facilities or between facilities and a central hub---to be independently scaled, emphasized, or effectively neutralized.

In this paper, we introduce and solve the \emph{Generalized Heron--Waist Problem (GHWP)}, a unified convex optimization model that simultaneously captures hub-based and cyclic connectivity objectives under general convex constraints. In Section~\ref{Formulation_GHWP}, we formulate the GHWP and discuss its fundamental structural properties. Section~\ref{characterization} is devoted to the theoretical analysis of the problem, where we establish existence and uniqueness of optimal solutions and derive complete necessary and sufficient optimality conditions with clear geometric force-balance interpretations. In Section~\ref{algorithms}, we develop a projected subgradient algorithm tailored to the nonsmooth convex structure of the GHWP and prove its convergence under standard diminishing step-size rules. Section~\ref{numerical} presents detailed numerical experiments in both $\mathbb{R}^2$ and $\mathbb{R}^3$, illustrating the practical performance, robustness, and scalability of the proposed method. Finally, Section~\ref{conclusion} concludes the paper with a summary of the main contributions and directions for future research.

\subsection{Problem Formulation} \label{Formulation_GHWP}
	We work throughout in the $n$-dimensional Euclidean space $\mathbb{R}^n$, equipped with the standard inner product $\langle \cdot, \cdot \rangle$ and the induced Euclidean norm $\|\cdot\|$. A set $C \subset \mathbb{R}^n$ is said to be \emph{convex} if for any $x, y \in C$ and any $\lambda \in [0,1]$, the convex combination $\lambda x + (1-\lambda)y$ belongs to $C$. It is \emph{strictly convex} if, for any two distinct points $x, y \in C$ and any $\lambda \in (0,1)$, the point $\lambda x + (1-\lambda)y$ lies in the interior of $C$, denoted by $\operatorname{int}(C)$. Throughout this paper, all constraint sets are assumed to be nonempty, closed, and convex.
	The Generalized Heron--Waist Problem (GHWP) involves two distinct classes of decision variables: a cyclic chain variable and a central hub variable. 
	\begin{itemize}
		\item \textbf{Chain Variables:}  The configuration of the polygonal chain is described by the vector $\mathbf{a} = (a_1, a_2, \ldots, a_m) \in (\mathbb{R}^n)^m, $
		where each variable $a_i \in \mathbb{R}^n$ is constrained to lie in a prescribed set $C_i \subset \mathbb{R}^n$. Geometrically, the vector $\mathbf{a}$ represents the variable of a closed polygonal chain: line segments connect $a_1$ to $a_2$, $a_2$ to $a_3$, and so on, with the path closing via the segment joining $a_m$ to $a_1$. To simplify notation, we adopt the cyclic index convention 	$a_{m+1} := a_1,$ 	so that the final edge length is expressed as $\|a_m - a_{m+1}\| = \|a_m - a_1\|$. 
		
		\item  \textbf{Hub Variable:} In addition to the chain, the model incorporates a central hub variable whose location is given by $x \in \mathcal{S} \subset \mathbb{R}^n$. It acts as a central facility from which radial connections extend to all vertices of the chain.
	\end{itemize}
	
For notational convenience, we collect all decision variables into a single composite vector $\mathbf{u} := (a_1, a_2, \ldots, a_m, x) \in \mathbb{R}^{n(m+1)},$ where $a_i \in \mathbb{R}^n$, $i=1,\ldots,m$, denote the vertices of a closed polygonal chain and $x \in \mathbb{R}^n$ represents a central hub location.
The feasible set for the composite variable $\mathbf{u}$ is defined as the Cartesian product
\begin{equation}
	\Omega := C_1 \times C_2 \times \cdots \times C_m \times \mathcal{S}
	\subset \mathbb{R}^{n(m+1)},
	\label{eq:feasible_region}
\end{equation}
where the sets $C_1, C_2, \ldots, C_m, \mathcal{S} \subset \mathbb{R}^n$ are assumed to be
nonempty, closed, and convex.
Consequently, the feasible region $\Omega$ is itself nonempty, closed, and convex.

\vspace{0.3cm}

Let $\boldsymbol{\rho} = (\rho_1, \ldots, \rho_m)$ and
$\boldsymbol{\omega} = (\omega_1, \ldots, \omega_m)$ be vectors of \emph{nonnegative weights},
where $\rho_i \ge 0$ and $\omega_i \ge 0$ for all $i = 1, \ldots, m$.
The weights $\rho_i$ quantify the cost or importance associated with the connectivity between
successive variables of the polygonal chain, while the weights $\omega_i$ represent the cost or
priority of connecting the chain variables to the central hub.

We define the objective function $J : \Omega \to \mathbb{R}$ by
\begin{equation}\label{eq:objective_ghwp}
	J(\mathbf{u}) = J(a,x)
	:=
	\underbrace{\sum_{i=1}^{m} \rho_i \|a_i - a_{i+1}\|}_{\text{Waist term } W(a)}
	+
	\underbrace{\sum_{i=1}^{m} \omega_i \|a_i - x\|}_{\text{Heron term } H(a,x)},
\end{equation}
where the cyclic convention $a_{m+1} := a_1$ is adopted.

These considerations naturally lead to the following optimization problem, which constitutes the central object of study in this paper.

\begin{equation}\label{Heron_Waist_Problem}
	\min_{\mathbf{u} \in \Omega} J(\mathbf{u}),
\end{equation}
which we refer to as the \emph{Generalized Heron--Waist Problem with nonnegative weights}.

The first term $W(a)$ measures the weighted total length of the closed polygonal chain,
generalizing the classical waist (perimeter) minimization problem.
The second term $H(a,x)$ represents the weighted total distance from the chain vertices to the hub,
generalizing the classical Heron-type facility location problem.
Allowing nonnegative weights provides modeling flexibility: setting certain weights to zero
effectively neutralizes the corresponding geometric interactions, while positive weights reflect
different costs or priorities in practical applications.

\begin{figure}[H]
	\centering
\includegraphics[scale=0.45]{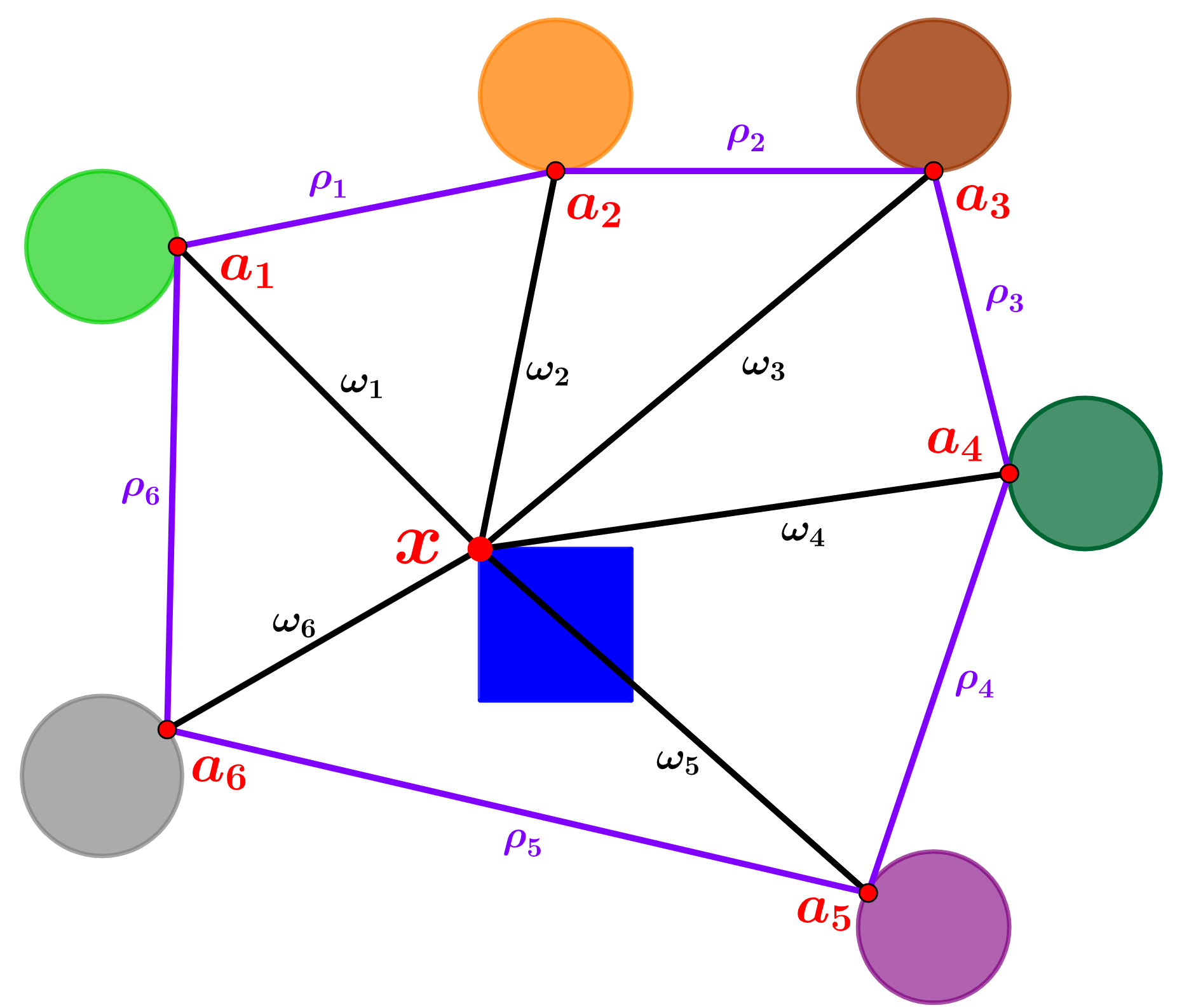}
	\caption{Illustration of the Generalized Heron-Waist Problem (GHWP).}
	\label{fig:GHP_Illustration}
\end{figure}

A critical assumption is imposed to ensure that each chain variable actively
participates in the optimization process.

\begin{assumption}
	\label{assump:nondegeneracy}
	For each $k \in \{1, 2, \ldots, m\}$, $	\rho_{k-1} + \rho_k + \omega_k > 0$ where the indices are interpreted cyclically, that is, $\rho_0 := \rho_m$.
\end{assumption}

\begin{remark}[Necessity of the Nondegeneracy Condition]
	\label{rem:nondegeneracy_necessity}
	If $\rho_{k-1} + \rho_k + \omega_k = 0$ for some index $k$, then the variable
	$a_k$ appears in the objective function only through the terms
	\[
	\rho_{k-1} \|a_{k-1} - a_k\|, \quad
	\rho_k \|a_k - a_{k+1}\|, \quad
	\omega_k \|a_k - x\|,
	\]
	all of which vanish. Consequently:
	\begin{enumerate}
		\item The objective functional $J(\mathbf{u})$ becomes \emph{independent} of
		$a_k \in C_k$.
		\item The constraint set $C_k$ is effectively \emph{inactive}, since any choice
		of $a_k \in C_k$ yields the same objective value.
		\item The variable $a_k$ is completely \emph{decoupled} from the optimization
		problem, leading to artificial non-uniqueness of solutions.
		\item The first-order optimality conditions become degenerate, as no balancing
		forces act on $a_k$ to determine its location.
	\end{enumerate}
\end{remark}

Assumption~\ref{assump:nondegeneracy} guarantees that every chain variable is involved in at least one active interaction—either with a neighboring variable or with the hub variable $x$—thereby ensuring a fully coupled and meaningful optimization problem.

To further clarify the role of Assumption~\ref{assump:nondegeneracy}, Figure~\ref{fig:nondegeneracy_illustration} provides a geometric illustration of a fully coupled configuration in which every chain variable actively participates in the optimization process. The figure highlights how each chain point interacts either with its neighboring variables through the waist terms or with the hub through the radial terms, ensuring that all variables are subject to balancing forces. This visual representation reinforces the necessity of the nondegeneracy condition for maintaining meaningful coupling and well-defined optimality conditions

\begin{figure}[H]
	\centering
	\includegraphics[width=0.50\textwidth]{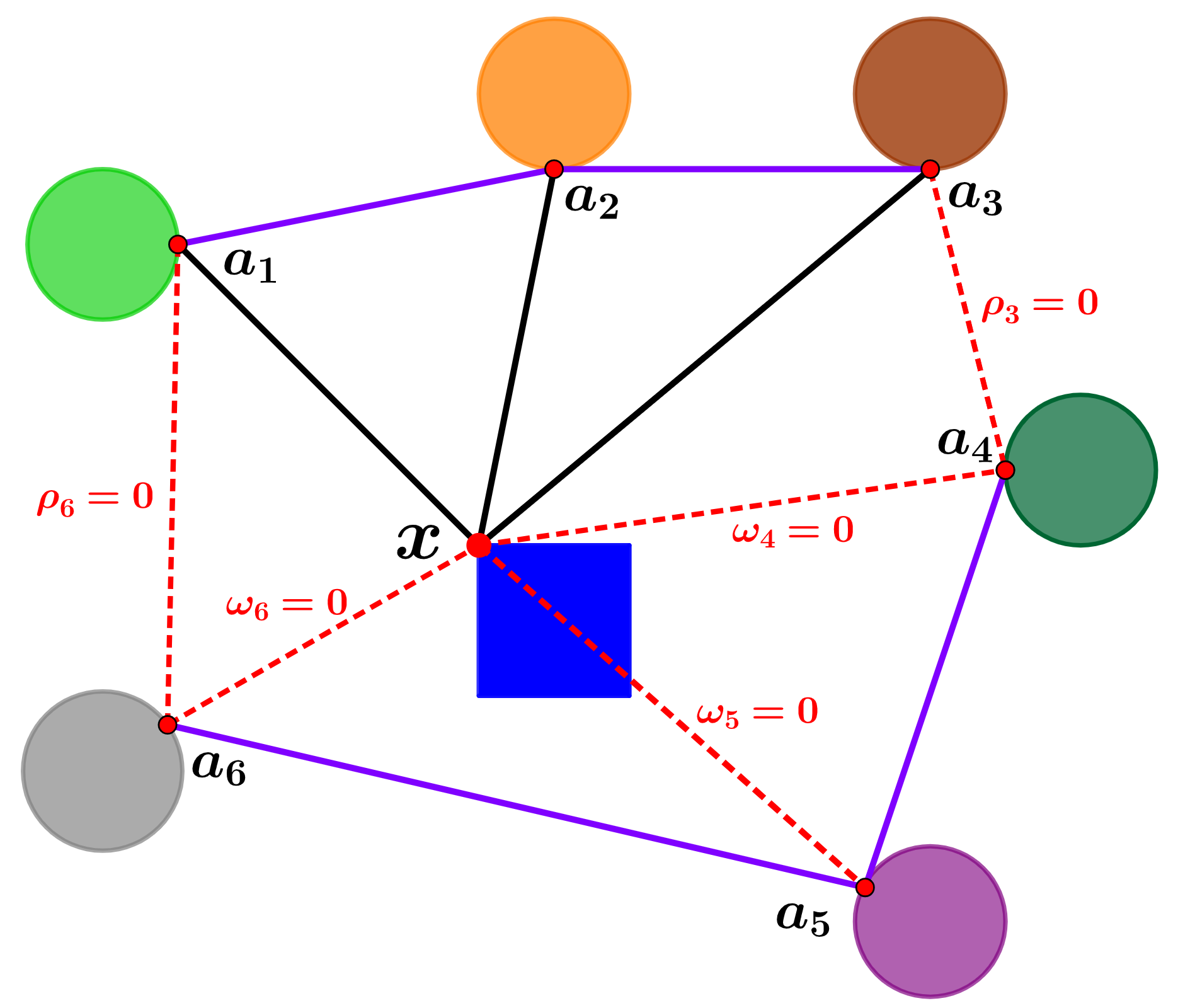}
	\caption{Illustration of the nondegeneracy condition in the Generalized Heron--Waist Problem.}
	\label{fig:nondegeneracy_illustration}
\end{figure}

\section{Preliminaries}
\label{sec:preliminaries}

In this section, we recall essential concepts and results from convex analysis that support both the theoretical characterization of optimal solutions for GHWP and the development of computational algorithms. A fundamental advantage of convex optimization is that every local minimum is a global minimum, and this property extends naturally to our problem due to the convex structure of both the objective function and the feasible region. We refer the reader to standard texts in convex analysis (e.g., \cite{Ro70, HiLe96, MoNa23}) for comprehensive background.

We begin with the fundamental notion of convexity for functions, which plays a pivotal role throughout our analysis.

\begin{definition}
	\label{def:effective_domain}
	For a function $f : \mathbb{R}^n \to (-\infty, +\infty]$, the \emph{effective domain} is the set
	\[
	\mathrm{dom}\, f := \{ x \in \mathbb{R}^n \mid f(x) < +\infty \}.
	\]
\end{definition}

\begin{definition}
	\label{def:convex_function}
	A function $f : \mathbb{R}^n \to (-\infty, +\infty]$ is \emph{convex} if $\mathrm{dom}\, f$ is a convex set and
	\[
	f(\lambda x + (1 - \lambda) y) \le \lambda f(x) + (1 - \lambda) f(y)
	\]
	for all $x, y \in \mathrm{dom}\, f$ and $\lambda \in [0,1]$. The function is \emph{strictly convex} if the inequality is strict whenever $x \neq y$ and $\lambda \in (0,1)$.
\end{definition}

A geometric characterization of convexity is provided by the epigraph, which lifts a function into a higher-dimensional set.

\begin{definition} 
	\label{def:epigraph}
	Let $f : \mathbb{R}^n \to \mathbb{R} \cup \{+\infty\}$ be an extended real-valued function.  
	The \emph{epigraph} of $f$ is defined as
	\[
	\operatorname{epi} f := \bigl\{ (x,t) \in \mathbb{R}^n \times \mathbb{R} \;\big|\; f(x) \le t \bigr\}.
	\]
	
	The function $f$ is said to be \emph{convex} if and only if its epigraph $\operatorname{epi} f$ is a convex subset of $\mathbb{R}^{n+1}$.                    	Moreover, the function $f$ is called \emph{closed} if its epigraph $\operatorname{epi} f$ is a closed subset of $\mathbb{R}^{n+1}$.
\end{definition}

\begin{definition}
	For a nonempty set $C \subset \mathbb{R}^n$, the \emph{distance function} to $C$ is defined by
	\[
	d_C(x) := \inf_{y \in C} \|x - y\|.
	\]
	If $C$ is convex, then $d_C$ is a convex and Lipschitz continuous function on $\mathbb{R}^n$.
\end{definition}

\begin{definition}
	Let $C \subset \mathbb{R}^n$ be nonempty. The \emph{projection} of a point $x \in \mathbb{R}^n$ onto $C$ is defined as
	\[
	\proj_C(x) := \arg\min_{y \in C} \|x - y\|.
	\]
	If $C$ is closed and convex, the projection is uniquely defined. Moreover, the distance function satisfies
	\[
	d_C(x) = \|x - \proj_C(x)\|.
	\]
\end{definition}

Auxiliary extended real-valued functions play an important role in convex optimization, particularly in transforming constrained problems into unconstrained ones. One of the most fundamental such constructions is the indicator function.

\begin{definition}
	For a set $C \subset \mathbb{R}^n$, the \emph{indicator function} of $C$ is defined as
	\[
	\delta_C(x) :=
	\begin{cases}
		0, & x \in C, \\
		+\infty, & x \notin C.
	\end{cases}
	\]
\end{definition}

To establish the uniqueness of the solution of problem \eqref{Heron_Waist_Problem}, we use two basic concepts from convex analysis: the convex hull and the separation theorem. The convex hull helps us describe the geometric structure of the feasible region, while the separation theorem ensures that distinct points can be separated by a hyperplane under appropriate conditions. Together with the strict convexity of at least one set, these tools lead to the conclusion that the problem admits a unique solution. This motivates us to recall the following definition.

\begin{definition} \label{def:convex_hull}	Given a set \( S \subseteq \mathbb{R}^n \), its \emph{convex hull}, denoted \(\mathrm{conv}(S)\), is the smallest convex set containing \(S\). Equivalently, \cite[Definition 6.11 ]{Be14}
	\[
	\mathrm{conv}(S)
	\;=\;
	\Bigl\{
	\sum_{i=1}^{m} \lambda_i x_i
	\;\Big\vert\;
	x_i \in S, \;
	\lambda_i \ge 0, \;
	\sum_{i=1}^{m} \lambda_i = 1, \;
	m \in \mathbb{N}
	\Bigr\}.
	\]
	In other words, any point in \(\mathrm{conv}(S)\) can be written as a convex combination of points in \(S\).
\end{definition}

The Separation of two convex sets by means of a hyperplane plays an important role in optimization and analysis. In particular, the alternative theorem arising in study of optimality conditions are consequences of separation theorem \cite[p.124]{HiLe96}. Here are these important notions.

\begin{theorem}
	 Let $C \subset \mathbb{R}^n$ be a nonempty closed convex set and let 
	 $y \notin C$. Then there exist a vector $a \in \mathbb{R}^n$, 
	 $a \neq 0$, and a scalar $\alpha \in \mathbb{R}$ such that
	 \[
	 \langle a, y \rangle > \alpha \geq \langle a, x \rangle 
	 \quad \text{for all } x \in C.
	 \]
\end{theorem}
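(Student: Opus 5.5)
We prove the strict separation statement by using the projection of $y$ onto $C$ to build the separating hyperplane. Set $p := \proj_C(y)$; it exists and is unique because $C$ is nonempty, closed and convex, as recalled in the preliminaries. Existence comes from minimizing the continuous coercive function $z \mapsto \|y-z\|$ over the closed set $C$ intersected with a large closed ball. Uniqueness follows from strict convexity of $\|\cdot\|^2$ together with convexity of $C$. Since $y \notin C$ and $p \in C$, we have $p \neq y$. We then take
\[
a := y - p \neq 0, \qquad \alpha := \langle a, p\rangle .
\]

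The key step is the variational inequality characterizing the projection:
\[
\langle y - p,\, x - p\rangle \le 0 \quad \text{for all } x \in C.
\]
To prove it, fix $x \in C$ and $t \in (0,1]$. The point $p + t(x-p)$ lies in $C$ by convexity, so minimality of $p$ gives $\|y-p\|^2 \le \|y - p - t(x-p)\|^2$. Expanding the right-hand side yields
\[
0 \le -2t\langle y-p, x-p\rangle + t^2\|x-p\|^2 .
\]
Dividing by $t$ and letting $t \downarrow 0$ gives the inequality. This is the only step with real content, and I expect it to be the main obstacle, though it is routine.

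It remains to read off the two conclusions. The variational inequality says $\langle a, x\rangle \le \langle a, p\rangle = \alpha$ for every $x \in C$. For the strict inequality at $y$, compute
\[
\langle a, y\rangle - \alpha = \langle a, y - p\rangle = \|y-p\|^2 > 0 .
\]
Hence $\langle a,y\rangle > \alpha \ge \langle a,x\rangle$ for all $x\in C$, which is the asserted strict separation.
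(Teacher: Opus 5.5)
Your proof is correct and complete. The nearest point $p=\proj_C(y)$ exists, and the variational inequality $\langle y-p,\,x-p\rangle\le 0$ for all $x\in C$ follows exactly as you derive it. With $a=y-p$ and $\alpha=\langle a,p\rangle$ you then get $\langle a,y\rangle-\alpha=\|y-p\|^2>0\ge\langle a,x\rangle-\alpha$ for every $x\in C$.

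The paper gives no proof of its own for this statement. It quotes the result as a standard fact, pointing to Hiriart-Urruty and Lemar\'echal. Your projection argument is the standard textbook proof, and it uses only the existence of projections onto closed convex sets, which the paper's preliminaries already assume.

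Two minor remarks. First, uniqueness of $p$ is never used, so any nearest point would do. Second, your variational inequality says exactly that $y-p\in N_C(p)$. Hence your normal $a$, after normalization, is the gradient $(y-p)/\|y-p\|$ of $d_C$ at $y$ given in Proposition~\ref{subgradientformula}.
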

The sets of the form $\{x \in \mathbb{R}^n \mid \langle a,x \rangle = \alpha \}$ and $\{x \in \mathbb{R}^n \mid \langle a,x \rangle \leq \alpha \}$ are called a \emph{hyperplane} and a \emph{closed halfspace}, respectively.

\begin{theorem}\label{seperationtheorem}
	Let $C, D \subset \mathbb{R}^n$ be nonempty convex sets such that 
$C \cap D = \emptyset$. Then there exist a vector 
$a \in \mathbb{R}^n$, $a \neq 0$, and a scalar 
$\alpha \in \mathbb{R}$ such that
\[
\langle a, x \rangle \leq \alpha \leq \langle a, y \rangle 
\quad \text{for all } x \in C \text{ and for all } y \in D.
\]
\end{theorem}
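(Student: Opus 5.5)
The plan is to deduce this separation of two disjoint convex sets from the point--set separation theorem stated just before it. The standard route is to form the difference set
\[
E := C - D = \{x - y \mid x \in C,\ y \in D\}.
\]
This set is nonempty and convex, being the image of the convex set $C \times D$ under a linear map. Since $C \cap D = \emptyset$, we have $0 \notin E$. The preceding theorem, however, requires a \emph{closed} set, while $E$ need not be closed: $C$ and $D$ are not assumed closed, and even for closed sets the difference can fail to be closed. So the key intermediate claim I will aim for is the following. There exists $a \neq 0$ with
\[
\langle a, z\rangle \le 0 \quad \text{for all } z \in \overline{E}.
\]
Equivalently, $0$ can be weakly separated from the convex set $E$, even though $0$ may lie on the boundary of $\overline{E}$.

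For this claim I split into two cases. \emph{Case 1:} $0 \notin \overline{E}$. Then I apply the preceding theorem directly to the closed convex set $\overline{E}$ (the closure of a convex set is convex) and the point $y = 0$. This gives $a \neq 0$ and $\alpha$ with $0 > \alpha \ge \langle a, z\rangle$ for all $z \in \overline{E}$, which is stronger than needed.

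\emph{Case 2:} $0 \in \overline{E}\setminus E$. This is the main obstacle. I will first show that $0 \notin \operatorname{int}\overline{E}$, using the standard fact that a convex set and its closure have the same interior (a relative-interior/line-segment argument: if $0$ were interior to $\overline{E}$, then a small simplex of points of $\overline E$ around $0$ could be perturbed into $E$, and $0$ would be a convex combination of them, forcing $0 \in E$). Hence there is a sequence $y_k \to 0$ with $y_k \notin \overline{E}$. Applying the preceding theorem to $\overline{E}$ and $y_k$ gives unit vectors $a_k$ (normalize after dividing by $\|a_k\|$) with $\langle a_k, z\rangle < \langle a_k, y_k\rangle$ for all $z\in \overline{E}$. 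By compactness of the unit sphere, a subsequence satisfies $a_k \to a$ with $\|a\|=1$. Passing to the limit gives $\langle a, z\rangle \le 0$ for all $z \in \overline{E}$, since $y_k \to 0$.

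Finally, I unpack the claim. It says $\langle a, x\rangle \le \langle a, y\rangle$ for all $x\in C$ and $y\in D$. Therefore $\sup_{x\in C}\langle a,x\rangle \le \inf_{y\in D}\langle a,y\rangle$. Both quantities are finite, since $C$ and $D$ are nonempty. Choosing $\alpha := \sup_{x\in C}\langle a,x\rangle$ yields $\langle a,x\rangle \le \alpha \le \langle a,y\rangle$, which is the required separation.
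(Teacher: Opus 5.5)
The paper does not prove this theorem. It is stated as standard background and attributed to Hiriart-Urruty and Lemar\'echal, so there is no in-paper argument to compare yours against.

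Your proof is correct, and it is essentially the standard textbook argument. You pass to the convex set $E=C-D$, for which $0\notin E$. When $0\notin\overline{E}$, you apply the closed-set separation theorem directly. When $0\in\overline{E}$, you obtain a supporting hyperplane at $0$ as a limit of normalized strict separations of exterior points $y_k\to 0$. This limit step is legitimate exactly because $\operatorname{int}\overline{E}=\operatorname{int}E$ for a convex $E\subset\mathbb{R}^n$, which guarantees such $y_k$ exist. Your simplex-perturbation sketch for that fact is valid: barycentric coordinates of $0$ depend continuously on the vertices of a nondegenerate simplex, so $0$ stays a strict convex combination of the perturbed points in $E$.

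One simplification: your Case 1 is subsumed by the limiting argument of Case 2, since you can take $y_k=0$ when $0\notin\overline{E}$.

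Compared with the relative-interior route (e.g.\ Rockafellar, Theorem 11.3), your route gives only weak separation, not proper separation under $\operatorname{ri}C\cap\operatorname{ri}D=\emptyset$. That is all the statement asks for, and your argument uses nothing beyond the point--closed-set separation theorem the paper states just before, plus compactness of the unit sphere.
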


In this case, we say that the sets $C$ and $D$ are separated by the hyperplane $H = \{ x \in \mathbb{R}^n \mid \langle a, x \rangle = \alpha \}$. If, in addition, the separation is strict, i.e., there exist  $a \in \mathbb{R}^n \setminus \{0\}$ and $\alpha \in \mathbb{R}$ such that $\langle a, x \rangle < \alpha \quad \forall x \in C$ and $\langle a, y \rangle > \alpha \quad \forall y \in D,$ then $C$ and $D$ are said to be \textbf{strictly separable}.

\begin{definition}
	Let $C \subset \mathbb{R}^n$ be a closed and convex and let $x \in C$. The \emph{normal cone} to $C$ at $x$ is defined as
	\[
	N_C(x) := \{\, v \in \mathbb{R}^n \mid \langle v, y - x \rangle \le 0 \;\; \forall\, y \in C \,\},
	\]
	Geometrically, $N_C(x)$ consists of all vectors that form a nonacute angle with every feasible direction emanating from $x$ within $C$.
\end{definition}

The following proposition provides a useful formula for computing normal cones to Cartesian products of convex sets, which will be repeatedly used in our analysis.

\begin{proposition}\cite[Page~59, Proposition~2.39]{DhDu11}
	\label{Normalcone_Product}
	Let $C_i \subset \mathbb{R}^{n_i}$ be closed convex sets and let $a_i \in C_i$ for $i = 1,2$.
	Then,
	\[
	N_{C_1 \times C_2}\bigl((a_1, a_2)\bigr)
	=
	N_{C_1}(a_1) \times N_{C_2}(a_2).
	\]
\end{proposition}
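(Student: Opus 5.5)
The statement to prove is Proposition~\ref{Normalcone_Product}: for closed convex $C_1\subset\mathbb{R}^{n_1}$, $C_2\subset\mathbb{R}^{n_2}$ and $(a_1,a_2)\in C_1\times C_2$,
\[
N_{C_1\times C_2}((a_1,a_2))=N_{C_1}(a_1)\times N_{C_2}(a_2).
\]
I will prove this by a direct double inclusion from the definition of the normal cone. I identify vectors in $\mathbb{R}^{n_1}\times\mathbb{R}^{n_2}$ with pairs $v=(v_1,v_2)$ and use the inner product $\langle v,y\rangle=\langle v_1,y_1\rangle+\langle v_2,y_2\rangle$. I also note that $C_1\times C_2$ is closed and convex, so its normal cone is defined.

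\textbf{Inclusion $\supseteq$.} Take $v_1\in N_{C_1}(a_1)$ and $v_2\in N_{C_2}(a_2)$, and let $(y_1,y_2)\in C_1\times C_2$ be arbitrary. Then
\[
\langle (v_1,v_2),(y_1,y_2)-(a_1,a_2)\rangle=\langle v_1,y_1-a_1\rangle+\langle v_2,y_2-a_2\rangle\le 0+0=0 .
\]
Each term is nonpositive by the defining inequality of the respective normal cone, so $(v_1,v_2)\in N_{C_1\times C_2}((a_1,a_2))$.

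\textbf{Inclusion $\subseteq$.} Let $(v_1,v_2)\in N_{C_1\times C_2}((a_1,a_2))$. The key step is to test the defining inequality against points that vary only one coordinate. For any $y_1\in C_1$, the point $(y_1,a_2)$ lies in $C_1\times C_2$ because $a_2\in C_2$. Plugging it in, the second-coordinate term vanishes, and we get $\langle v_1,y_1-a_1\rangle\le 0$. Hence $v_1\in N_{C_1}(a_1)$. Symmetrically, testing with $(a_1,y_2)$ for $y_2\in C_2$ gives $v_2\in N_{C_2}(a_2)$.

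The only point needing care is this reverse inclusion. It works because the base point itself lies in the product, so the "freeze one coordinate" test points are feasible. No convexity or closedness beyond what is needed to define the cones is used.

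By induction on the number of factors, the result extends to finitely many factors, e.g. to $\Omega$ in \eqref{eq:feasible_region}: write $C_1\times\cdots\times C_k=(C_1\times\cdots\times C_{k-1})\times C_k$ and apply the two-factor case.
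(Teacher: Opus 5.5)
Your double-inclusion argument is correct and complete. The $\supseteq$ direction follows from additivity of the product inner product, and the $\subseteq$ direction works because the frozen-coordinate test points $(y_1,a_2)$ and $(a_1,y_2)$ lie in $C_1\times C_2$. The paper gives no proof of its own; it only cites \cite[Proposition~2.39]{DhDu11}, and your definitional argument is the standard proof of this fact. Your closing induction remark is worth keeping: the paper applies the result to the $(m+1)$-fold product $\Omega$ in \eqref{eq:normal_product} without comment, and the induction is what justifies that step.
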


Since the distance function is generally nondifferentiable, tools from nonsmooth analysis are required. The central concept is the subgradient, which extends the classical notion of a gradient to convex functions but possibly nondifferentiable.

\begin{definition}
	Let $f : \mathbb{R}^n \to (-\infty, +\infty]$ be convex. A vector $v \in \mathbb{R}^n$ is called a \emph{subgradient} of $f$ at $x \in \mathrm{dom}\,f$ if
	\[
	f(y) \ge f(x) + \langle v, y - x \rangle
	\quad \forall\, y \in \mathbb{R}^n.
	\]
	The set of all subgradients of $f$ at $x$ is called the \emph{subdifferential} of $f$ at $x$ and is denoted by $\partial f(x)$.
\end{definition}

For indicator function of convex sets, the subdifferential admits a particularly simple characterization.

\begin{equation}\label{eq:indicator-subgrad}
	\partial \delta_C(x) = N_C(x), \quad x \in C,
\end{equation}
as shown in~\cite[Example~3.7, p.~85]{MoNa23}.

The subdifferential of the distance function plays a crucial role in both the theoretical analysis and the design of numerical algorithms for the Generalized Heron--Waist Problem.

\begin{proposition}[Subdifferential of the distance function]
	\label{subgradientformula}
	Let $C \subset \mathbb{R}^n$ be nonempty, closed, and convex. Then, for any $x \in \mathbb{R}^n$,
	\[
	\partial d_C(x) =
	\begin{cases}
		\left\{ \dfrac{x - \proj_C(x)}{\|x - \proj_C(x)\|} \right\}, & \text{if } x \notin C, \\[10pt]
		N_C(x) \cap \mathbb{B}, & \text{if } x \in C,
	\end{cases}
	\]
	where $\mathbb{B} := \{ z \in \mathbb{R}^n \mid \|z\| \le 1 \}$ is the closed unit ball.
	A proof can be found in~\cite[Example~3.3, p.~259]{HiLe96}.
\end{proposition}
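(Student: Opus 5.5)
The plan is to treat the two cases $x \notin C$ and $x \in C$ separately, using only the definition of the subgradient, the variational characterization of the projection
\[
\langle x - \proj_C(x),\, y - \proj_C(x)\rangle \le 0 \quad \forall\, y \in C,
\]
nonexpansiveness of $\proj_C$, and the $1$-Lipschitz property of $d_C$. Throughout, write $p := \proj_C(x)$.

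\textbf{Case $x \in C$.} Here $p = x$ and $d_C(x) = 0$, and I would prove the two inclusions directly.

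For ``$\supseteq$'', take $v \in N_C(x) \cap \mathbb{B}$ and any $y \in \mathbb{R}^n$, and set $q := \proj_C(y)$. Then
\[
d_C(y) = \|y - q\| \ge \langle v, y - q\rangle \ge \langle v, y - q\rangle + \langle v, q - x\rangle = \langle v, y - x\rangle .
\]
The first inequality holds because $\|v\| \le 1$. The second holds because $q \in C$ and $v \in N_C(x)$ give $\langle v, q - x\rangle \le 0$. This is exactly the subgradient inequality at $x$.

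For ``$\subseteq$'', let $v \in \partial d_C(x)$.
\begin{itemize}
\item Testing the subgradient inequality with $y \in C$ gives $0 \ge \langle v, y - x\rangle$, so $v \in N_C(x)$.
\item Testing with $y = x + v$ and using $d_C(x+v) \le \|v\|$ (which follows from the Lipschitz property) gives $\|v\|^2 \le \|v\|$, so $\|v\| \le 1$.
\end{itemize}

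\textbf{Case $x \notin C$.} The goal is to show that $d_C$ is differentiable at $x$ with gradient $(x-p)/\|x-p\|$. Since a convex function differentiable at a point has its gradient as the only subgradient there, this gives the singleton formula.

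I would first show that $d_C^2$ is differentiable at $x$ with gradient $2(x - p)$. Let $q := \proj_C(y)$.
\begin{itemize}
\item \emph{Upper bound.} Since $p \in C$, we have $d_C^2(y) \le \|y - p\|^2$. Expanding,
\[
d_C^2(y) \le d_C^2(x) + 2\langle x - p, y - x\rangle + \|y - x\|^2 .
\]
\item \emph{Lower bound.} Symmetrically, $d_C^2(x) \le \|x - q\|^2 = d_C^2(y) + 2\langle y - q, x - y\rangle + \|x - y\|^2$. Rearranging,
\[
d_C^2(y) \ge d_C^2(x) + 2\langle y - q, y - x\rangle - \|y - x\|^2 .
\]
Nonexpansiveness gives $\|(y - q) - (x - p)\| \le 2\|y - x\|$. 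Hence $\langle y - q, y - x\rangle = \langle x - p, y - x\rangle + O(\|y - x\|^2)$.
\end{itemize}
Combining the two bounds yields $d_C^2(y) = d_C^2(x) + 2\langle x - p, y - x\rangle + O(\|y - x\|^2)$, which is the claimed differentiability of $d_C^2$.

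Since $d_C(x) = \|x - p\| > 0$ and $d_C = \sqrt{d_C^2}$, the chain rule gives
\[
\nabla d_C(x) = \frac{2(x - p)}{2\|x - p\|} = \frac{x - p}{\|x - p\|} .
\]

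The main obstacle is this lower bound, i.e.\ controlling $y - \proj_C(y)$ against $x - \proj_C(x)$; nonexpansiveness of the projection is what makes it work.
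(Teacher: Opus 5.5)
Your proof is correct in both cases. The paper gives no argument of its own; it only cites Hiriart-Urruty--Lemar\'echal, so your proposal is a self-contained alternative rather than a reconstruction of the paper's reasoning.

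\textbf{Case $x\in C$.} You verify the two inclusions directly from the subgradient inequality. Testing with $y\in C$ gives $v\in N_C(x)$. Testing with $y=x+v$ gives $\|v\|\le 1$. The reverse inclusion follows from $\|y-\proj_C(y)\|\ge\langle v,y-\proj_C(y)\rangle$ together with $\langle v,\proj_C(y)-x\rangle\le 0$.

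\textbf{Case $x\notin C$.} You prove the stronger fact that $d_C^2$ is Fr\'echet differentiable with gradient $2(x-p)$. This comes from two-sided quadratic bounds, with the lower bound controlled by nonexpansiveness of the projection. You then pass to $d_C=\sqrt{d_C^2}$ and use the fact that a differentiable convex function has its gradient as its only subgradient.

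\textbf{What your route buys.} It uses only elementary tools and yields more than the statement asks: $d_C$ is actually differentiable off $C$, not merely a function with a singleton subdifferential there. Note that the variational inequality for the projection, which you list among your tools, is never invoked; nonexpansiveness suffices.

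\textbf{A more structural alternative.} Write $d_C$ as the infimal convolution of $\delta_C$ and $\|\cdot\|$, which is exact at $p$. Then $\partial d_C(x)=N_C(p)\cap\partial\|\cdot\|(x-p)$, and both cases follow at once:
\begin{itemize}
\item For $x\notin C$, the unit vector $(x-p)/\|x-p\|$ lies in $N_C(p)$ precisely by the projection variational inequality, so the intersection is that singleton.
\item For $x\in C$, $p=x$ and $\partial\|\cdot\|(0)=\mathbb{B}$, which gives $N_C(x)\cap\mathbb{B}$.
\end{itemize}
This route is uniform and very short, but it depends on the subdifferential rule for exact infimal convolutions. Your proof avoids that rule and is fully self-contained.
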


In the special case where $C = \{y\}$ is a singleton, the projection satisfies $\proj_C(x) = \{y\}$ and $d_C(x) = \|x - y\|$. Hence, for $x \neq y$,
\begin{equation}
	\label{subgradientformula_forsingleton}
	\partial d_C(x) = \left\{ \frac{x - y}{\|x - y\|} \right\}.
\end{equation}

The following affine composition rule is a fundamental tool in subdifferential calculus.

\begin{theorem}[Affine Composition Rule, {\cite[Theorem 4.2.1, p.263]{HiLe96}}]\label{affinetheorem}
	Let  $T: \mathbb{R}^n \to \mathbb{R}^m$ 
	be an affine mapping defined by $T(x) \;=\; T_0 \,x \;+\; c,$ where  	$T_0: \mathbb{R}^n \to \mathbb{R}^m$
	is a linear operator and  \(c \in \mathbb{R}^m\).  
	Let \(\psi: \mathbb{R}^m \to \mathbb{R}\) be a finite convex function. Then, for every \(x \in \mathbb{R}^n\),
	\[
	\partial\bigl(\psi\circ T\bigr)(x)
	\;=\;
	T_0^*\,\partial \psi\bigl(T(x)\bigr),
	\]
	where $T_0^*: \mathbb{R}^m \to \mathbb{R}^n$
	denotes the adjoint operator of \(T_0\). In other words, \(T_0^*\) is the unique linear map satisfying
	\[
	\langle T_0\,x,\; v \rangle
	\;=\;
	\langle x,\; T_0^*\,v \rangle
	\quad
	\forall\, x \in \mathbb{R}^n,\; v \in \mathbb{R}^m.
	\]
\end{theorem}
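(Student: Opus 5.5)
The plan is to reduce $\partial(\psi\circ T)(x)=T_0^*\,\partial\psi(T(x))$ to the case of a linear map. Since $\partial(\psi\circ T)(x)$ depends only on the values of $\psi\circ T$, I would introduce $\tilde\psi(y):=\psi(y+c)$. This is again a finite convex function on $\mathbb{R}^m$, it satisfies $\psi\circ T=\tilde\psi\circ T_0$, and directly from the definition of the subgradient $\partial\tilde\psi(T_0x)=\partial\psi(T(x))$. So it suffices to prove $\partial(\tilde\psi\circ T_0)(x)=T_0^*\,\partial\tilde\psi(T_0x)$, and I then drop the tildes.

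The inclusion "$\supseteq$" is immediate. Take $v\in\partial\psi(T_0x)$ and any $z\in\mathbb{R}^n$. Then
\[
\psi(T_0z)\ge\psi(T_0x)+\langle v,T_0(z-x)\rangle=\psi(T_0x)+\langle T_0^*v,z-x\rangle,
\]
so $T_0^*v\in\partial(\psi\circ T_0)(x)$.

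The reverse inclusion "$\subseteq$" is the main obstacle, and I would handle it through directional derivatives and a separation argument. Because $\psi$ is finite and convex on $\mathbb{R}^m$, it is continuous, so $\partial\psi(y)$ is nonempty, convex and compact. Moreover, the directional derivative $\psi'(y;d)=\max_{v\in\partial\psi(y)}\langle v,d\rangle$ is the support function of $\partial\psi(y)$; I take this standard fact from \cite{HiLe96} as given. By the chain rule for directional derivatives of a composition with a linear map, the directional derivative of $\psi\circ T_0$ at $x$ is
\[
(\psi\circ T_0)'(x;d)=\psi'(T_0x;T_0d)=\max_{v\in\partial\psi(T_0x)}\langle T_0^*v,d\rangle .
\]
Thus $(\psi\circ T_0)'(x;\cdot)$ is the support function of the set $K:=T_0^*\partial\psi(T_0x)$. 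This set is compact and convex, being a linear image of a compact convex set.

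Now suppose $w\in\partial(\psi\circ T_0)(x)$ but $w\notin K$. By the strict separation theorem stated above (applied to the closed convex set $K$ and the point $w$), there is a direction $d$ with
\[
\langle w,d\rangle>\max_{u\in K}\langle u,d\rangle=(\psi\circ T_0)'(x;d).
\]
On the other hand, the subgradient inequality gives $\bigl((\psi\circ T_0)(x+td)-(\psi\circ T_0)(x)\bigr)/t\ge\langle w,d\rangle$ for all $t>0$. Letting $t\downarrow0$ yields $(\psi\circ T_0)'(x;d)\ge\langle w,d\rangle$, which contradicts the strict inequality above. Hence $w\in K$, which completes the equality.

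The delicate points are the existence and compactness of $\partial\psi$, which is where finiteness of $\psi$ is used, and the max-formula for $\psi'$. Compactness is what makes $K$ closed, and closedness of $K$ is exactly what the separation step requires.
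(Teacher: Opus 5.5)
Your proof is correct. The paper gives no argument of its own and only cites Hiriart-Urruty--Lemar\'echal, so the relevant comparison is with the proof in that source, which is the same as yours: $(\psi\circ T)'(x;d)=\psi'(T(x);T_0d)$ is the support function of the compact convex set $T_0^*\,\partial\psi(T(x))$, and a compact convex set is determined by its support function; your separation step just unpacks that last fact. The reduction to linear $T_0$ via $\tilde\psi$ is harmless but unnecessary, since $T(x+td)=T(x)+tT_0d$ already gives the directional-derivative identity for the affine map directly.
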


Following the discussion on \cite[p.\ 263]{HiLe96}, Let $f: \underbrace{\mathbb{R}^n \times \cdots \times \mathbb{R}^n}_{m\text{ times}}	\;\longrightarrow\; \mathbb{R}$
be a convex function in \(m\) blocks of variables, i.e.\ we write 
\((x_1, x_2, \dots, x_m)\) with each \(x_i \in \mathbb{R}^n\).  
Fix any \((x_2,\dots,x_m) \in \mathbb{R}^{n \times (m-1)}\), and define the affine map $ T: \mathbb{R}^n \to \mathbb{R}^{n \times m},\quad T(x_1) = (x_1, x_2, \dots, x_m)$ 
The linear part of \( T \) is \( T_0(x_1) = (x_1, 0, \dots, 0) \), and the corresponding adjoint is \( T_0^*(v_1, v_2, \dots, v_m) = v_1 \). Hence \( f \circ T \) is precisely the function \( x_1 \mapsto f(x_1, x_2, \dots, x_m) \).

\medskip

For each \(1 \le i \le m\), define the partial function
\[
f_i: \mathbb{R}^n \to \mathbb{R},
\quad
f_i(x_i) 
\;=\;
f\bigl(x_1,\dots,x_{i-1},\;x_i,\;x_{i+1},\dots,x_m\bigr),
\]
where the other \(x_j\) (with \(j \neq i\)) are held fixed. Applying Theorem~\ref{affinetheorem} to each such affine composition gives:
\[
\partial f_i(x_i)
\;=\;
\Bigl\{
v_i \in \mathbb{R}^n 
\;\Bigm|\;
\exists\,v_j \,(j\neq i)\text{ with }(v_1,\dots,v_m)\in \partial f(x_1,\dots,x_m)
\Bigr\}.
\]
Thus, \(\partial f_i(x_i)\) is the \emph{projection} of \(\partial f(x_1,\dots,x_m)\) onto the \(i\)th block. Consequently,
\begin{equation}\label{subset_inclusion}
	\partial f(x_1,\dots,x_m)
	\;\subseteq\;
	\partial f_1(x_1)
	\;\times\;
	\partial f_2(x_2)
	\;\times\;
	\cdots
	\;\times\;
	\partial f_m(x_m).
\end{equation}

\begin{remark}\label{remark:subgradient_equality}
	In general, the inclusion above in \eqref{subset_inclusion}  is strict. However, if for each \(i\) the partial function \(f_i\) is differentiable at \(x_i\), then \(\partial f_i(x_i)\) reduces to a singleton \(\{\nabla f_i(x_i)\}\). Consequently, if all \(f_i\) are differentiable at the respective \(x_i\) for \(i = 1, \dots, m\), the subdifferential becomes as:
	\begin{equation}\label{sub_cartessian}
		\partial f(x_1,\dots,x_m)
		\;=\;
		\Bigl\{
		\bigl(\nabla f_1(x_1),\,\nabla f_2(x_2),\,\dots,\,\nabla f_m(x_m)\bigr)
		\Bigr\}.
	\end{equation}
\end{remark}

\begin{example}[\textbf{Subdifferential of the pairwise distance}]
	\label{ex:subdiff_distance}
	Let \( D(a_1, \dots, a_m) = \|a_1 - a_2\| \) denote the pairwise distance between \( a_1 \) and \( a_2 \) in \( \mathbb{R}^n \), $a_1 \ne a_2$ viewed as a function of the tuple \( (a_1, \dots, a_m) \in \mathbb{R}^{n \times m}\). The subdifferential of \( D \) with respect to the full tuple is given by the ordered tuple of partial subdifferentials:
	\[
	\partial D(a_1, \dots, a_m) = \bigl( \partial D_1(a_1),\, \partial D_2(a_2),\, \partial D_3(a_3),\, \dots,\, \partial D_m(a_m) \bigr),
	\]
	where \( \partial D_k(a_k) \) denotes the projection of \( \partial D(a_1, \dots, a_m) \) onto the \( k \)-th coordinate.
	
	By Remark~\ref{remark:subgradient_equality} and the preceding discussion, we obtain:
	\[
	\partial D_1(a_1) = \frac{a_1 - a_2}{\|a_1 - a_2\|}, \qquad
	\partial D_2(a_2) = \frac{a_2 - a_1}{\|a_1 - a_2\|}, \qquad
	\partial D_k(a_k) = \mathbf{0} \quad \text{for all } k \ge 3.
	\]
	Applying the Cartesian product rule for subdifferentials see \eqref{sub_cartessian}, we conclude that
	\[
	\partial D(a_1, \dots, a_m) =
	\biggl(
	\frac{a_1 - a_2}{\|a_1 - a_2\|},\;
	\frac{a_2 - a_1}{\|a_1 - a_2\|},\;
	\mathbf{0}, \dots, \mathbf{0}
	\biggr).
	\]
	Thus, only the first two components contribute to the subdifferential while the remaining \( m - 2 \) components are zero.
\end{example} 

Following is the most fundamental result regarding subdifferentials calculus and is widely used as sum rule.

\begin{theorem}[Moreau--Rockafellar Theorem {\cite[Corollary~3.21, p.~93]{MoNa23}}]
	\label{sumofsubgradient}
	Let \(\psi_i : \mathbb{R}^n \to (-\infty, +\infty]\), \(i=1,\dots,k\), be closed convex functions.  
	Suppose there exists \(\bar{x} \in \bigcap_{i=1}^k \mathrm{dom} \psi_i\) at which all but at most one of the functions are continuous.  
	Then for all \(x \in \bigcap_{i=1}^k \mathrm{dom} \psi_i\),
	\[
	\partial \left( \sum_{i=1}^k \psi_i \right)(x)
	= \sum_{i=1}^k \partial \psi_i(x)
	:= \left\{ \sum_{i=1}^k w_i \ \middle|\ w_i \in \partial \psi_i(x), \ i=1,\dots,k \right\}.
	\]
\end{theorem}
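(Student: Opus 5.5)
The Moreau--Rockafellar sum rule is a standard result that the paper cites from \cite[Corollary~3.21]{MoNa23}; here is how I would prove it from first principles. The plan is to first handle two functions and then induct. For $k$ functions, order them so that $\psi_1,\dots,\psi_{k-1}$ are continuous at $\bar x$ and $\psi_k$ may not be. Set $\phi:=\psi_1+\dots+\psi_{k-1}$. It is closed, convex and continuous at $\bar x$, and $\bar x$ lies in $\mathrm{dom}\,\phi\cap\mathrm{dom}\,\psi_k$. The two-function rule gives $\partial(\phi+\psi_k)=\partial\phi+\partial\psi_k$. Inside $\phi$ every summand is continuous at $\bar x$, so the same two-function rule, applied repeatedly, splits $\partial\phi$ into $\sum_{i<k}\partial\psi_i$. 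The inclusion ``$\supseteq$'' is immediate in every case: add the subgradient inequalities $\psi_i(y)\ge\psi_i(x)+\langle w_i,y-x\rangle$ over $i$.

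For two functions, $f$ continuous at $\bar x$ and $g$ arbitrary, fix $x\in\mathrm{dom} f\cap\mathrm{dom} g$ and $v\in\partial(f+g)(x)$. I would work with the shifted functions $\tilde f(y):=f(y)-f(x)-\langle v,y-x\rangle$ and $\tilde g(y):=g(y)-g(x)$. With these, $\tilde f(y)+\tilde g(y)\ge 0$ for all $y$, and $\tilde f(x)=\tilde g(x)=0$. Define the convex sets
\[
\Theta_1:=\{(y,t)\mid t>\tilde f(y)\},\qquad \Theta_2:=\{(y,t)\mid t\le-\tilde g(y)\}.
\]
$\Theta_1$ is the strict epigraph of $\tilde f$, and it has nonempty interior because $f$ is bounded above near $\bar x$. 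The inequality $\tilde f+\tilde g\ge0$ makes $\Theta_1\cap\Theta_2=\emptyset$. Theorem~\ref{seperationtheorem} then gives $(w,\gamma)\neq 0$ and $\alpha$ with
\[
\langle w,y\rangle+\gamma t\le\alpha\le\langle w,z\rangle+\gamma s
\]
for all $(y,t)\in\Theta_2$ and all $(z,s)\in\Theta_1$. Since $t$ can be driven to $+\infty$ in $\Theta_1$, we get $\gamma\ge0$.

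The main obstacle is ruling out the vertical hyperplane $\gamma=0$, and this is where continuity at $\bar x$ is used. If $\gamma=0$, then $w\neq0$ and $\langle w,y\rangle\le\alpha\le\langle w,z\rangle$ for all $y\in\mathrm{dom}\, g$ and $z\in\mathrm{dom}\, f$. But $\bar x$ lies in $\mathrm{dom}\, g$ and in the interior of $\mathrm{dom}\, f$. So $\langle w,\cdot\rangle$ would attain its minimum over a neighborhood of $\bar x$ at the interior point $\bar x$, which forces $w=0$, a contradiction. Hence $\gamma>0$, and we may normalize to $\gamma=1$. Taking $(x,0)$, which lies in $\Theta_2$ and in the closure of $\Theta_1$, shows $\alpha=\langle w,x\rangle$. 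Reading off the two inequalities then gives:
\begin{itemize}
\item $\tilde f(z)\ge\langle -w,z-x\rangle$, that is, $v-w\in\partial f(x)$;
\item $\tilde g(y)\ge\langle w,y-x\rangle$, that is, $w\in\partial g(x)$.
\end{itemize}
Thus $v=(v-w)+w\in\partial f(x)+\partial g(x)$, which completes the two-function case. Closedness of the functions is not essential to this argument; only convexity and the continuity point are used. The remaining care is in the limiting step: the first inequality holds for all $s>\tilde f(z)$, so we let $s\downarrow\tilde f(z)$.
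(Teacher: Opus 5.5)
Your proof is correct. The paper gives no argument of its own for this statement; it only cites Mordukhovich--Nam. The relevant comparison is therefore with the route in that reference, which is geometric. There, a normal-cone intersection rule is proved once by separation and then applied to the epigraphical sets $\{(y,\lambda_1,\lambda_2)\mid \lambda_1\ge f(y)\}$ and $\{(y,\lambda_1,\lambda_2)\mid \lambda_2\ge g(y)\}$ in $\mathbb{R}^n\times\mathbb{R}\times\mathbb{R}$, and subgradients are read off from the normal vector $(v,-1,-1)$.

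You instead perform a single separation in $\mathbb{R}^{n+1}$ between the strict epigraph of the tilted function $\tilde f$ and the hypograph of $-\tilde g$, which is the classical sandwich argument. Your version has several advantages:
\begin{itemize}
\item It is shorter and needs nothing beyond Theorem~\ref{seperationtheorem} as already stated in the paper.
\item It isolates the single place where the continuity point matters, namely excluding a vertical hyperplane $\gamma=0$.
\item It confirms that closedness of the $\psi_i$ plays no role.
\item It covers exactly the way the paper uses the rule, for $J+\delta_\Omega$ with $J$ finite and continuous everywhere.
\end{itemize}

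The geometric route buys modularity and generality. The same intersection rule also yields chain and maximum rules. In finite dimensions it also gives the sharper qualification $\bigcap_i\operatorname{ri}(\mathrm{dom}\,\psi_i)\neq\emptyset$. Your argument would reach that only after replacing the interiority of $\bar x$ in $\mathrm{dom}\,f$ by a proper-separation argument.

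One small tidying remark: the nonempty interior of $\Theta_1$ is never used. The separation theorem you invoke only needs the two sets to be nonempty, convex, and disjoint, and nonemptiness is witnessed by $(x,1)\in\Theta_1$ and $(x,0)\in\Theta_2$.
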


\section{Characterization of Solutions}
\label{characterization}

Building on the convex analysis framework established in Section~\ref{sec:preliminaries}, we now characterize the mathematical structure of solutions to the Generalized Heron--Waist Problem. This section establishes three fundamental results: existence of optimal solutions under mild boundedness conditions, uniqueness under additional geometric assumptions, and necessary and sufficient optimality conditions via subdifferential calculus. Together, these results provide a complete variational characterization of optimal configurations and form the theoretical foundation for algorithmic development.

\subsubsection*{Terminology and Connectivity Structure}

To formulate the existence theory for the Generalized Heron--Waist Problem, we introduce
some notions describing how the constraint sets are interconnected through the
weights in the objective function.

For the index set $J \subseteq \{ 1,2, \ldots, m\}$, let $\mathcal{C} := \{\, C_j \mid j \in J \,\} \cup \{\mathcal{S}\}$
denote a collection of constraint sets associated with the chain variables and,
optionally, the hub variable. 

\begin{enumerate}[label=(\Roman*)]
	\item \textbf{Polygonal chain of sets with hub.}  
	If $\rho_i > 0$ and $\omega_i > 0$ for all $i = 1,\ldots,m$, then the ordered collection
	$\{ C_1, C_2, \ldots, C_m, \mathcal{S} \}$
	is called a \emph{polygonal chain of sets with hub $\mathcal{S}$}.  
	In this case, each chain set is coupled both to its neighbors and to the hub through
	active interaction weights.
	
	\item \textbf{Polygonal chain of sets without hub.}  
	If $\omega_i = 0$ for all $i = 1,\ldots,m$ but $\rho_i > 0$, the ordered collection
	$\{ C_1, C_2, \ldots, C_m \}$
	is called a \emph{polygonal chain of sets}.  
	Here, the geometry is governed solely by interactions between neighboring chain sets,
	with no coupling to the hub.
	
	\item \textbf{Connected component with hub.}  Let $K \subseteq \{1,2,\ldots,m\}$. An ordered subcollection
	$\mathcal{C}_K := \{ C_k \mid k \in K \} \cup \{ \mathcal{S} \}$
	is called a \emph{connected component of a polygonal chain with hub $\mathcal{S}$}
	if the following conditions hold:
	
	\begin{enumerate}[label=(\alph*)]
		\item There exists at least one $k \in K$ such that $\omega_k > 0$, ensuring a nontrivial coupling with the hub.
		
		\item For every $k \in K$, either $C_k$ is directly connected to the hub, that is, $\omega_k > 0$, or, if $\omega_k = 0$, then $C_k$ is connected to a neighboring set through an active chain interaction that eventually links it to the hub. More precisely, if $\omega_k = 0$ and $\rho_{k-1} = 0$ but $\rho_k > 0$, then there exists an index $j$ with $k < j \le l$ such that
		$\rho_{k+1} > 0, \rho_{k+2} > 0, \ldots, \rho_l > 0$, $\rho_{l+1} = 0$, and $\omega_j > 0$.
	\end{enumerate}
	
This definition captures the notion that information propagates through the chain via positive weights and possibly through the hub.

\begin{figure}[H]
	\centering
	\includegraphics[width=0.75\textwidth]{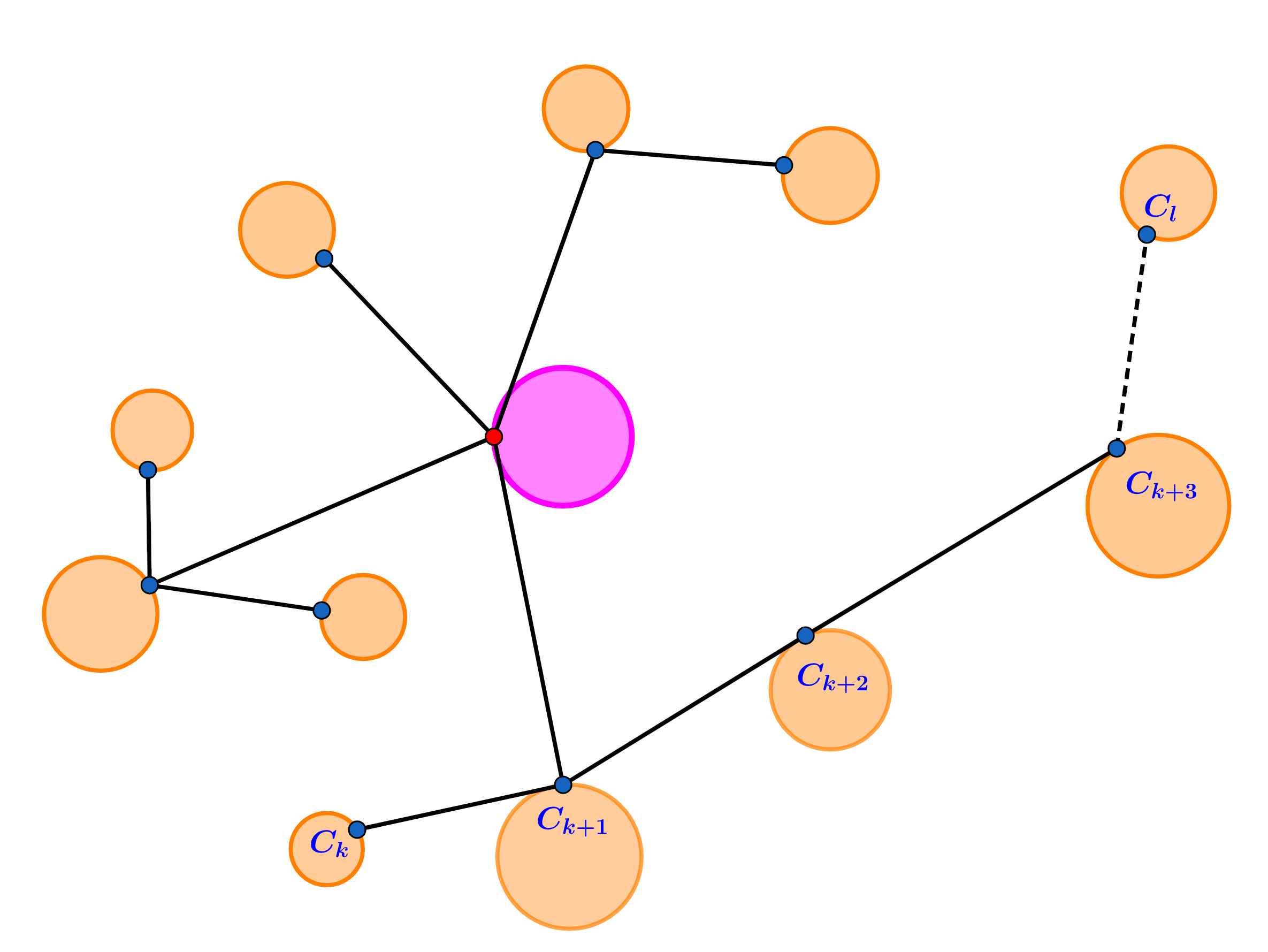}
	\caption{Illustration of a connected component of a polygonal chain with hub $\mathcal{S}$}
\end{figure}

	\item \textbf{Connected component without hub.}  
	An ordered subcollection $\{ C_k, C_{k+1}, \ldots, C_{k+t} \}$
	is called a \emph{connected component of a polygonal chain of sets} if
	$	\rho_{k+j} > 0 \quad \text{for all } j = 0,1,\ldots,t-1.$
	we assume that every component, in this case,
	contains at least two $C_j$'s, so that no decoupling occurs for any $C_j$. 	In particular, if $\omega_i = 0$ and $\rho_i > 0$ for all $i = 1,\ldots,m$, then the
	entire collection $\{ C_1, \ldots, C_m \}$ forms a single connected component. 
	
	\item \textbf{Edge length.}  
	For any adjacent chain variables $a_j$ and $a_{j+1}$, the quantity
	$	\|a_j - a_{j+1}\|$
	is referred to as the \emph{edge length} associated with the corresponding pair of
	sets $C_j$ and $C_{j+1}$.
\end{enumerate}

These notions allow us to precisely characterize how boundedness and interaction
structure propagate across the chain, which is essential for establishing existence
of optimal solutions in the presence of vanishing weights.

\begin{definition}\label{def:general_position}
	Let $C_1, C_2, \ldots, C_m, \mathcal{S} \subset \mathbb{R}^n$ be nonempty, closed, and convex sets.
	For each $i \in \{1, \ldots, m\}$, define the convex hull
	\[
	K_i := \operatorname{conv}\!\bigl(
	C_1 \cup \cdots \cup C_{i-1} \cup C_{i+1} \cup \cdots \cup C_m \cup \mathcal{S}
	\bigr).
	\]
	We say that the collection $\{C_i\}_{i=1}^m$ is in \emph{general position} (with respect to $\mathcal{S}$)
	if $C_i \cap K_i = \varnothing$ for all  $i = 1, \ldots, m.$
\end{definition}

\subsection{Existence of Optimal Solutions}
\label{subsec:existence}

We establish that the Generalized Heron--Waist Problem admits at least one optimal solution under natural boundedness assumptions on the constraint sets. The key insight is that the vertices connected to the hub with positive weight induce boundedness through radial coupling, while purely perimeter-connected components require at least one bounded constraint set in each component to prevent unbounded configurations.

\begin{theorem}\label{existencetheorem}
	Let $C_1, \ldots, C_m, \mathcal{S} \subseteq \mathbb{R}^n$ be nonempty, closed, and
	convex sets. Assume that the collection $\{C_i\}_{i=1}^m$ is in general position in the sense of
	Definition~\ref{def:general_position}.Let $\rho_i \ge 0$ and $\omega_i \ge 0$ for $i = 1,\ldots,m$, and
	assume the following conditions hold:
	
	\begin{enumerate}[label=(\alph*)]
		\item For every connected component of the polygonal chain of sets with hub $\mathcal{S}$, at least one of the sets $\mathcal{S}$ or $C_j$ is bounded in that
		component (here we assume that $\omega_j > 0$ for at least one $j$, i.e., no
		decoupling of $\mathcal{S}$ with $C_j$ occurs).
		
		\item For every connected component of the polygonal chain without hub $\mathcal{S}$ i.e. with $C_j$'s only,
		at least one $C_j$ is bounded.
	\end{enumerate}
	
	Then the generalized Heron--waist problem (5) admits an optimal solution.
\end{theorem}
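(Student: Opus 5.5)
The plan is the direct method: show that $J$ is continuous on the closed set $\Omega$ and that some nonempty sublevel set $\{\mathbf{u}\in\Omega : J(\mathbf{u})\le J(\mathbf{u}^0)\}$ is bounded. Weierstrass's theorem then yields a minimizer. Continuity is immediate, since $J$ is a finite nonnegative combination of norms of linear maps, hence convex and Lipschitz on $\mathbb{R}^{n(m+1)}$. Fix any $\mathbf{u}^0\in\Omega$ (possible because all sets are nonempty) and set $\gamma:=J(\mathbf{u}^0)$. If the set is unbounded we may just as well minimize over a compact subset of $\Omega$ containing $\mathbf{u}^0$, but only after showing every $\mathbf{u}$ with $J(\mathbf{u})\le\gamma$ is confined. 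So the whole proof reduces to a priori bounds on each block $a_k$ and on $x$ along the sublevel set.

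The key elementary estimate is that whenever a weight $\rho_i>0$ (resp. $\omega_i>0$), $J(\mathbf{u})\le\gamma$ gives $\|a_i-a_{i+1}\|\le\gamma/\rho_i$ (resp. $\|a_i-x\|\le\gamma/\omega_i$). Boundedness then propagates along active edges. If one variable in a set of variables joined by a path of positive weights is bounded, then by the triangle inequality every variable on that path is bounded, with a bound like $R+\gamma\sum 1/(\text{weights on the path})$. I would organize the argument by the connectivity structure from the terminology (I)--(IV), decomposing the index set $\{1,\dots,m\}$ according to the graph whose vertices are $a_1,\dots,a_m,x$ and whose edges are the pairs carrying positive weights.

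Component by component:
\begin{itemize}
\item For a connected component with hub, hypothesis (a) supplies a bounded set among $\mathcal{S}$ and the $C_j$ of that component, so the corresponding variable is confined to a ball. Condition (III)(b) guarantees every $a_k$ in the component is linked to $x$ by a path of positive weights, so $x$ and then all $a_k$ in the component are bounded. If the bounded set is some $C_j$, one first passes from $a_j$ to $x$ along its path and then back out.
\item For components without hub, hypothesis (b) gives a bounded $C_j$, and the path $\rho_{k},\dots,\rho_{k+t-1}>0$ bounds every $a_i$ in the component.
\item Assumption~\ref{assump:nondegeneracy} ensures every index lies in at least one such component, so no variable is left free.
\item If the hub $x$ belongs to no component, that is, all $\omega_i=0$, then $J$ is independent of $x$. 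One then simply fixes $x=x^0\in\mathcal{S}$ and minimizes over $a$, which the previous steps handle.
\end{itemize}
Hence the sublevel set, intersected with $\{x=x^0\}$ in the decoupled case, is closed and bounded, so it is compact, and a minimizer exists.

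The main obstacle I expect is the bookkeeping in the component decomposition. I must verify that the definitions (III)--(IV) genuinely cover every index with a path of positive weights to a variable known to be bounded, including wrap-around indices via the cyclic convention $\rho_0=\rho_m$. I would handle this by stating the claim purely graph-theoretically: each connected component of the positive-weight graph containing an $a_k$ contains a vertex constrained to a bounded set. I would then prove the bound by induction on graph distance. General position is not actually needed for existence. I would remark that it is used only to rule out degenerate configurations, such as coincident vertices, later in the paper, and that the argument above does not invoke it.
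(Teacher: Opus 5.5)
Your argument is correct and is essentially the paper's proof. Both bound every block of $\mathbf{u}$ by propagating the estimates $\|a_i-a_{i+1}\|\le\gamma/\rho_i$ and $\|a_i-x\|\le\gamma/\omega_i$ along positive-weight paths from a bounded constraint set, then conclude by continuity and compactness; the paper phrases this with a minimizing sequence and Bolzano--Weierstrass instead of a bounded sublevel set. Your explicit handling of the case $\omega_i=0$ for all $i$ (fixing $x=x^0$), and your observation that general position is never used, are sound refinements of points the paper's proof passes over.
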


\begin{proof}
	The feasible region $\Omega = C_1 \times C_2 \times \cdots \times C_m \times \mathcal{S}$
	is nonempty by assumption and closed, being the Cartesian product of closed sets.
	The objective function $J : \Omega \to \mathbb{R}$ is continuous, being a finite
	sum of Euclidean norms, and satisfies $J(u) \ge 0$ for all $u \in \Omega$.
	Define
	\[
	\nu := \inf_{u \in \Omega} J(u),
	\]
	since $J(u)$ is not identically $+\infty$, $0 \le \nu < \infty$.
	
	By definition of infimum, there exists a minimizing sequence
	\[
	\{u^{(\ell)}\}_{\ell=1}^{\infty} \subset \Omega,
	\qquad
	u^{(\ell)} = \bigl(a_1^{(\ell)}, a_2^{(\ell)}, \ldots, a_m^{(\ell)}, x^{(\ell)}\bigr),
	\]
	satisfying
	\[
	\lim_{\ell \to \infty} J\bigl(u^{(\ell)}\bigr) = \nu.
	\]

	Since a convergent sequence is bounded, there exists $M > 0$ such that
	\[
	J\bigl(u^{(\ell)}\bigr) \le M
	\quad \text{for all } \ell.
	\]
	
	We now demonstrate that $\{u^{(\ell)}\}$ is bounded in $\mathbb{R}^{n \times (m+1)}$.

	\medskip
	\noindent
	\textbf{Case (I):} For the connected component with the hub set $\mathcal{S}$.
	
	If the set $\mathcal{S}$ is bounded, then $\{x^{(\ell)}\}$ is bounded.
	Otherwise, by assumption (a), some $C_j$ is bounded, which implies the boundedness
	of $\{a_j^{(\ell)}\}$. Together with bounded edge length\\
	
	\[
	\left\| a_j^{(\ell)} - x^{(\ell)} \right\|
	\le \frac{J\bigl(u^{(\ell)}\bigr)}{\omega_j}
	\le \frac{M}{\omega_j}, 
	\quad \text{if } \omega_j > 0.
	\]	
and/or 
	\[
	\|a_j^{(\ell)} - a_{j+1}^{(\ell)}\|
	\le \frac{J\bigl(u^{(\ell)}\bigr)}{\rho_j}
	\le \frac{M}{\rho_j},
	\quad \text{if } \rho_j > 0.
	\]
	and/or
	\[
	\|a_{j-1}^{(\ell)} - a_j^{(\ell)}\|
	\le \frac{M}{\rho_{j-1}}, 
	\quad \text{if } \rho_{j-1} > 0.
	\]
	Since at least one of $\omega_j, \rho_j$ and $\rho_{j-1}$ is positive, and $\omega_k>0$ for at least one $k$, we can use 
	
	\[
	\left\| a_k^{(\ell)} - x^{(\ell)} \right\|
	\le \frac{J\bigl(u^{(\ell)}\bigr)}{\omega_k}
	\le \frac{M}{\omega_k}, 
	\]	
	
	The repeated application of triangle inequality implies boundedness of all
	sequences $a_j^{(\ell)}$ and $x^\ell$ of chain sets in that component.

	Thus for a connected component having the set $\mathcal{S}$, by assumption (a),
	all the corresponding sequences in chain sets and hub set are bounded.

	\medskip
	\noindent
	\textbf{Case (II):} If the component has only $C_j$'s, then by assumption (b)
	at least one $C_j$ is bounded.
	
	As discussed in (a), we see that all the sequences
	$\{a_j^{(\ell)}\}$ are bounded for all sets $C_j$ in the component.
	Thus, combining Case (I) and Case (II), we conclude that all sequences
	$\{a_i^{(\ell)}\}$, $i = 1,2,\ldots,m$, and $\{x^{(\ell)}\}$ are bounded.

	By the Bolzano--Weierstrass theorem, there exists a convergent subsequence $\{\mathbf{u}^{(\ell_k)}\}$ with limit
	\begin{equation}
		\label{eq:limit_point}
		\mathbf{u}^* = \lim_{k \to \infty} \mathbf{u}^{(\ell_k)} \in \mathbb{R}^{n(m+1)}.
	\end{equation}
	
	Since each $\mathbf{u}^{(\ell_k)} \in \Omega$ and $\Omega$ is closed, we have $\mathbf{u}^* \in \Omega$. By continuity of $J$,
	\begin{equation}
		\label{eq:optimal_value}
		J(\mathbf{u}^*) = \lim_{k \to \infty} J\bigl(\mathbf{u}^{(\ell_k)}\bigr) = \nu = \inf_{\mathbf{u} \in \Omega} J(\mathbf{u}),
	\end{equation}
	proving that $\mathbf{u}^*$ is an optimal solution.
	
\end{proof}

\begin{corollary}[Uniform Positive Radial Weights]
	\label{cor:uniform_positive}
	If $\omega_i > 0$ for all $i \in \{1, \ldots, m\}$ and either $\mathcal{S}$ or at least one $C_i$ is bounded, then the GHWP admits an optimal solution.
\end{corollary}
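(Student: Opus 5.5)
The plan is to run the minimizing-sequence argument from the proof of Theorem~\ref{existencetheorem} directly. The point is that when all radial weights are positive, every chain vertex is tied straight to the hub, so the propagation along chain edges is unnecessary. Since $\Omega$ is nonempty and closed and $J$ is continuous and nonnegative, the value $\nu:=\inf_{\Omega}J$ is finite. I take a minimizing sequence $\mathbf{u}^{(\ell)}=(a_1^{(\ell)},\ldots,a_m^{(\ell)},x^{(\ell)})\in\Omega$ with $J(\mathbf{u}^{(\ell)})\le M$ for all $\ell$. Because every term of $J$ is nonnegative and each $\omega_i>0$, dropping all other terms gives
\[
\|a_i^{(\ell)}-x^{(\ell)}\|\le \frac{M}{\omega_i}\qquad\text{for all } i=1,\ldots,m \text{ and all } \ell .
\]

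I then split into two cases. If $\mathcal{S}$ is bounded, then $\{x^{(\ell)}\}$ is bounded, and the displayed estimate bounds each $\{a_i^{(\ell)}\}$ through $\|a_i^{(\ell)}\|\le\|x^{(\ell)}\|+M/\omega_i$. If instead some $C_j$ is bounded, then $\{a_j^{(\ell)}\}$ is bounded, so $\|x^{(\ell)}\|\le\|a_j^{(\ell)}\|+M/\omega_j$ is bounded, and the first case's estimate then bounds every other $a_i^{(\ell)}$. In either case the whole sequence $\{\mathbf{u}^{(\ell)}\}$ is bounded in $\mathbb{R}^{n(m+1)}$.

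To finish, Bolzano--Weierstrass gives a convergent subsequence. Its limit $\mathbf{u}^*$ lies in $\Omega$ because $\Omega$ is closed, and continuity of $J$ gives $J(\mathbf{u}^*)=\nu$, exactly as at the end of the proof of Theorem~\ref{existencetheorem}.

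I expect no real obstacle. The only point to watch is that I do not invoke Theorem~\ref{existencetheorem} as stated, since it also assumes general position, which the corollary does not. The direct argument above never uses general position, so the corollary holds without it. Alternatively, one could note that with all $\omega_i>0$ the whole collection $\{C_1,\ldots,C_m,\mathcal{S}\}$ is a single connected component with hub, so hypothesis (a) of the theorem reduces to the corollary's hypothesis and (b) is vacuous.
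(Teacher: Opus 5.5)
Your argument is correct. It differs from the paper's mainly in being self-contained.

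The paper proves the corollary in one line by appealing to Theorem~\ref{existencetheorem}. With every $\omega_i>0$ there are no hub-free components, so (b) is vacuous, and (a) is exactly the corollary's boundedness hypothesis. This is precisely the alternative you mention at the end. That reduction is shorter, but it has two costs. First, it imports the general-position assumption of Theorem~\ref{existencetheorem}, which the corollary does not state. Second, it depends on reading the paper's loosely defined notions of connected component (with and without hub) in the intended maximal sense.

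Your direct minimizing-sequence argument avoids both issues and proves the corollary exactly as stated. The bound $\|a_i^{(\ell)}-x^{(\ell)}\|\le M/\omega_i$ for every $i$ ties each vertex straight to the hub, so no propagation along chain edges is needed. It also makes visible that general position plays no role in the existence argument of Theorem~\ref{existencetheorem} itself.
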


\begin{proof}
	In this case, condition (b) is vacuous and (a) is assumed by hypothesis.
\end{proof}

\begin{corollary}[Bounded Feasible Region]
	\label{cor:bounded_feasible}
If $\Omega$ is bounded, i.e., all the sets are bounded, then the GHWP admits an optimal solution regardless of the weight configuration.
\end{corollary}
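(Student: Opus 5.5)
The plan is to prove Corollary~\ref{cor:bounded_feasible} directly from compactness, without going through the connectivity bookkeeping of Theorem~\ref{existencetheorem}. That theorem also assumes general position, which the corollary does not grant, so a self-contained argument is safer.

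First I will note that $\Omega = C_1 \times \cdots \times C_m \times \mathcal{S}$ is closed and bounded when every factor is, hence compact in $\mathbb{R}^{n(m+1)}$ by Heine--Borel. It is also nonempty, since each factor is nonempty.

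Second, I will check that $J$ is continuous on $\mathbb{R}^{n(m+1)}$. For any nonnegative weights $\rho_i, \omega_i$, $J$ is a finite nonnegative combination of maps $\mathbf{u} \mapsto \|a_i - a_{i+1}\|$ and $\mathbf{u} \mapsto \|a_i - x\|$. Each of these is a norm composed with a linear map, hence Lipschitz. The weight configuration plays no role here, including zero weights and any failure of Assumption~\ref{assump:nondegeneracy}.

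Third, the Weierstrass extreme value theorem gives a minimizer $\mathbf{u}^* \in \Omega$, so the GHWP admits an optimal solution. If one prefers to mirror the proof of Theorem~\ref{existencetheorem}, the same conclusion follows because any minimizing sequence lies in the bounded set $\Omega$. Bolzano--Weierstrass then gives a convergent subsequence, whose limit lies in $\Omega$ by closedness and attains $\nu$ by continuity.

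There is no real obstacle. The only point to stress is that boundedness of all sets makes both hypotheses (a) and (b) of Theorem~\ref{existencetheorem}, and the general position assumption, unnecessary. That is why the argument should not be phrased as a literal application of that theorem.
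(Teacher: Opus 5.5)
Your proposal is correct and essentially the same as the paper's proof, which also argues directly rather than by invoking Theorem~\ref{existencetheorem}: every minimizing sequence lies in the bounded set $\Omega$, Bolzano--Weierstrass gives a convergent subsequence, and closedness of $\Omega$ together with continuity of $J$ makes the limit optimal. Your compactness-plus-Weierstrass phrasing is just the packaged form of that same argument, and you spell out the minimizing-sequence version explicitly as well.
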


\begin{proof}
	Boundedness of $\Omega$ immediately implies boundedness of every minimizing sequence. The Bolzano--Weierstrass theorem then guarantees existence of a convergent subsequence whose limit, by continuity and closedness, is optimal.
\end{proof}

\subsection{Uniqueness of Optimal Solutions}
\label{subsec:uniqueness}

Having established the existence of optimal solutions, we now turn to the question of uniqueness. Although convexity ensures that every local minimizer is a global one, uniqueness generally requires additional geometric structures. In the generalized Heron–Waist problem, the combination of strict convexity of the constraint sets, strictly positive interaction weights, and suitable geometric separation plays a crucial role in obtaining a unique solution. In this subsection, we develop supporting remarks and lemmas that lead to a sufficient criterion guaranteeing uniqueness of the optimal configuration.

\medskip
Consider the generalized Heron problem  (as discussed in ~\cite{Mordukhovich2012}):  
\[
\min_{x \in S} D(x) := \sum_{i=1}^{n} d(x, C_i),
\]
where $S$ and $C_i$ are nonempty, closed, and convex subsets of $\mathbb{R}^n$ satisfying $S \cap C_i = \varnothing$ for all $i=1,\ldots,n$.

For any $\bar{x} \in S$, define the normalized direction vectors
\[
a_i(\bar{x}) := \frac{\bar{x} - \operatorname{proj}_{C_i}(\bar{x})}{d(\bar{x}, C_i)},
\qquad i = 1,\ldots,n,
\]
which are well-defined and nonzero due to the strict separation of $S$ and $C_i$. According to \cite[Theorem~3.2]{Mordukhovich2012}, the point $\bar{x}$ is optimal solution if and only if  
\begin{equation}
	\label{eq:borwein_condition}
	-\sum_{i=1}^{n} a_i(\bar{x}) \in N_S(\bar{x})
\end{equation}
holds, where $N_S(\bar{x})$ denotes the normal cone to $S$ at $\bar{x}$.

\begin{remark}\label{rem:boundary_characterization}
	Consider the GHP, assume that each set $C_i$ is a singleton $\{x_i\}$ and that the collection $\{x_i\}_{i=1}^{n}$ is (strictly) separated from $S$ by a hyperplane. In this situation, all points $x_i$ lie on one side of the hyperplane, while the set $S$ lies entirely on the opposite side. In this case,
	\[
	\sum_{i=1}^{n} a_i(\bar{x}) = \sum_{i=1}^{n} \frac{\bar{x} - x_i}{\|\bar{x} - x_i\|} \neq 0.
	\]
	If $\bar{x} \in \operatorname{int}(S)$, then the normal cone satisfies $N_S(\bar{x}) = \{0\}$, which contradicts the optimality condition \eqref{eq:borwein_condition}. Therefore, any optimal solution must necessarily lie on the boundary of the feasible region, that is,
	\[
	-\sum_{i=1}^{n} a_i(\bar{x}) \in N_S(\bar{x})
	\quad \Longleftrightarrow \quad
	\bar{x} \in \operatorname{bd}(S).
	\]
	In particular, if $\bar{x} \in \operatorname{int}(S)$, then
	\[
	D(\bar{x}) > \min_{x \in S} D(x).
	\]
\end{remark}

\begin{theorem}\label{thm:uniqueness}
	Let $C_1, C_2, \ldots, C_m, \mathcal{S} \subseteq \mathbb{R}^n$ be nonempty, closed, and
	strictly convex sets. Assume that the collection $\{C_i\}_{i=1}^m$ is in general
	position with respect to the hub set $\mathcal{S}$ in the sense of
	Definition~\ref{def:general_position}, and at least one of the sets
	$C_1, \ldots, C_m, \mathcal{S}$ is bounded.	Furthermore, suppose that all interaction weights are strictly positive, that is, $	\rho_i > 0$ and $ \omega_i > 0$ for all $i = 1, \ldots, m.$ Then the Generalized Heron--Waist Problem~\eqref{Heron_Waist_Problem} admits a
	\emph{unique} optimal solution.
\end{theorem}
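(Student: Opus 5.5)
Existence follows from Corollary~\ref{cor:uniform_positive}: all $\omega_i>0$ and at least one set is bounded. So only uniqueness needs work. The plan is to argue by contradiction. Suppose $\mathbf{u}=(a_1,\dots,a_m,x)$ and $\mathbf{u}'=(a_1',\dots,a_m',x')$ are two distinct minimizers, and set $\mathbf{u}^\lambda=\lambda\mathbf{u}+(1-\lambda)\mathbf{u}'$ for $\lambda\in(0,1)$. By convexity of $\Omega$ and $J$, $\mathbf{u}^\lambda$ is again a minimizer. Hence equality must hold in every triangle inequality
\[
\rho_i\|\lambda(a_i-a_{i+1})+(1-\lambda)(a_i'-a_{i+1}')\|\le \lambda\rho_i\|a_i-a_{i+1}\|+(1-\lambda)\rho_i\|a_i'-a_{i+1}'\|,
\]
and likewise for the terms $\omega_i\|a_i-x\|$.

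Since all weights are positive, each edge vector pair must be positively collinear: $a_i-a_{i+1}$ and $a_i'-a_{i+1}'$ point in the same direction, and so do $a_i-x$ and $a_i'-x'$. The general position assumption (Definition~\ref{def:general_position}) gives $C_i\cap K_i=\varnothing$. In particular $a_i\neq x$, $a_i\neq a_{i+1}$, and the same holds for the primed points, so all these vectors are nonzero and the directions are well defined.

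The second step exploits strict convexity. Suppose some coordinate differs, say $a_k\neq a_k'$. Then the midpoint $\tilde a_k=\tfrac12(a_k+a_k')$ lies in $\operatorname{int}(C_k)$, and the midpoint configuration $\tilde{\mathbf{u}}$ is optimal. Fix all other coordinates of $\tilde{\mathbf{u}}$. Then $\tilde a_k$ minimizes over $C_k$ the function
\[
\varphi(y)=\rho_{k-1}\|\tilde a_{k-1}-y\|+\rho_k\|y-\tilde a_{k+1}\|+\omega_k\|y-\tilde x\|,
\]
which is differentiable at $\tilde a_k$ because the three anchor points lie in $K_k$, disjoint from $C_k$. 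Since $\tilde a_k$ is interior, $\nabla\varphi(\tilde a_k)=0$ (the normal cone is $\{0\}$). However, $\nabla\varphi(\tilde a_k)$ is a positive combination of unit vectors pointing from the points $\tilde a_{k\pm1},\tilde x\in K_k$ toward $\tilde a_k$. Strictly separate the point $\tilde a_k$ from the closed convex set $\operatorname{conv}\{\tilde a_{k-1},\tilde a_{k+1},\tilde x\}\subset K_k$ by the first separation theorem, with normal $v$. Each unit vector then has positive inner product with $v$, so the gradient is nonzero, a contradiction. This is the argument of Remark~\ref{rem:boundary_characterization} applied vertexwise.

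The case $x\neq x'$ is handled identically. The midpoint $\tilde x\in\operatorname{int}\mathcal{S}$, and the points $\tilde a_i\in C_i$ must be strictly separated from $\tilde x$. This is where I expect the \textbf{main obstacle}: general position as defined only separates each $C_i$ from $K_i\ni\mathcal S$. I would argue that $\tilde x\notin\operatorname{conv}\{\tilde a_1,\dots,\tilde a_m\}$ as follows. If $\tilde x=\sum\mu_i\tilde a_i$, pick $i$ with $\mu_i>0$. Then $\tilde a_i$ is a convex combination of $\tilde x\in\mathcal S$ and the $\tilde a_j$, $j\ne i$, so $\tilde a_i\in K_i$, which contradicts $C_i\cap K_i=\varnothing$ (the degenerate case $\mu_i=1$ gives $\tilde a_i=\tilde x\in K_i$, same contradiction). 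With separation in hand, the gradient of $y\mapsto\sum\omega_i\|\tilde a_i-y\|$ at $\tilde x$ is nonzero, contradicting interior optimality.

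Thus $a_k=a_k'$ for all $k$ and $x=x'$, so the minimizer is unique. A caveat I would check: the first-step collinearity is not actually needed once the midpoint argument is available. The midpoint argument alone suffices, since any differing coordinate yields an interior optimal point, contradicting the separation-based nonvanishing gradient.
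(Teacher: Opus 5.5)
\textbf{The chain-vertex case is correct; the hub case has a genuine gap.}

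Your argument for $a_k\neq a_k'$ is correct and is essentially the paper's argument. The midpoint lies in $\operatorname{int}(C_k)$, so the normal cone there is trivial. The $k$-th gradient block is then a positive combination of unit vectors pointing away from points of $K_k$, and strict separation makes it nonzero. The paper phrases the same step through $\mathbf 0\in\partial J(\mathbf M)+N_\Omega(\mathbf M)$ blockwise.

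The problem is the case $x\neq x'$. Your lemma $\tilde x\notin\operatorname{conv}\{\tilde a_1,\dots,\tilde a_m\}$ rests on an algebra slip. From $\tilde x=\sum_j\mu_j\tilde a_j$ you get $\tilde a_i=\mu_i^{-1}\tilde x-\sum_{j\neq i}(\mu_j/\mu_i)\tilde a_j$. That is an affine combination with negative coefficients, not a convex one. The lemma is in fact false under Definition~\ref{def:general_position}:
\begin{itemize}
\item Take small disks $C_1,C_2,C_3$ at the vertices of a large equilateral triangle and a small disk $\mathcal S$ at its centroid. Then $C_i\cap K_i=\varnothing$ for each $i$.
\item Choose $\tilde a_i$ to be the vertices and $\tilde x$ the centroid. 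Then $\tilde x$ lies in the hull and $\sum_i(\tilde x-\tilde a_i)/\|\tilde x-\tilde a_i\|=\mathbf 0$.
\end{itemize}
So interiority of $\tilde x$ alone yields no contradiction. The paper's own proof has the same hole: its ``without loss of generality $t_k\in\operatorname{int}(C_k)$'' never treats $x\neq y$. You located the weak point correctly, but your patch does not close it.

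\textbf{The missing idea} is exactly the collinearity step you set aside. Suppose $a_k=a_k'$ for all $k$ but $x\neq x'$. Tightness of each term $\omega_i\|a_i-\cdot\|$ along $[x,x']$ forces $a_i-x$ and $a_i-x'$ to be positively proportional. Hence every $a_i$ lies on the line through $x$ and $x'$. The $a_i$ are pairwise distinct, since $C_j\subset K_i$ for $j\neq i$. So for $m\ge 3$, one of them lies between two others, that is, in $K_i$, which contradicts general position. Equivalently, non-collinear $a_i$ make $y\mapsto\sum_i\omega_i\|a_i-y\|$ strictly convex, so the hub is unique.

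For $m=2$ no argument can work, because the statement itself fails. Take $C_1,C_2,\mathcal S$ to be the closed unit disks centered at $(-3,0)$, $(3,0)$, $(0,0)$, with all weights equal to $1$. General position holds, and
\[
J=2\|a_1-a_2\|+\|a_1-x\|+\|a_2-x\|\ge 3\|a_1-a_2\|\ge 12 .
\]
Equality holds for $a_1=(-2,0)$, $a_2=(2,0)$ and every $x$ on the segment from $(-1,0)$ to $(1,0)$, so there are infinitely many minimizers. To close the hub case you therefore need $m\ge 3$ together with the collinearity argument.
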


\begin{proof}
	
		The objective function $J : \Omega \to \mathbb{R}$ is convex and continuous,
	and the feasible region $\Omega = C_1 \times C_2 \times \cdots \times C_m \times \mathcal{S}$ is nonempty, closed, and convex. Moreover, by assumption, at least one of the sets $C_i$ or $\mathcal{S}$ is bounded.
	Hence, by Theorem~\ref{existencetheorem}, an optimal solution to the problem exists.
	
	We now prove uniqueness by contradiction. Assume that there exist two distinct optimal solutions
	$	\mathbf{U} = (a_1, a_2, \ldots, a_m, x)$ and $	\mathbf{W} = (b_1, b_2, \ldots, b_m, y),$
 $\mathbf{U} \neq \mathbf{W},$	such that $J(\mathbf{U}) = J(\mathbf{W}) =\nu.$ Since the feasible region $\Omega$ is convex, their midpoint 
	\[
	\mathbf{M} := \frac{1}{2}(\mathbf{U} + \mathbf{W}) =(t_1,t_2, \ldots, t_m, z)
	\]
	also belongs to $\Omega$, where $\displaystyle t_i= \frac{(a_i+b_i)}{2}, z=\frac{x+y}{2}$. By convexity of the objective function $J$, Jensen’s
	inequality yields
	\[
	J(\mathbf{M})
	\le
	\frac{1}{2} J(\mathbf{U}) + \frac{1}{2} J(\mathbf{W})
	= \nu.
	\]
	Since $\nu$ is the minimal value, we must have $J(\mathbf{M}) = \nu$,
	and hence $\mathbf{M}$ is also an optimal solution.
	
	Because $\mathbf{U} \neq \mathbf{W}$, there exists at least one index $k$ such that
	$a_k \neq b_k$ or $x \neq y$. By strict convexity of $C_k$ or $\mathcal{S}$, the
	corresponding midpoint component satisfies
	\[
	t_k=\frac{a_k + b_k}{2} \in \operatorname{int}(C_k)
	\quad \text{or} \quad
	z=\frac{x + y}{2} \in \operatorname{int}(\mathcal{S}).
	\]
	For any point in the interior of a convex set, the normal cone reduces to the zero
	vector. Consequently,
	\[
	N_{C_k}\!\left(t_k\right) = \{\mathbf{0}\}
	\quad \text{or} \quad
	N_{\mathcal{S}}\!\left(z\right) = \{\mathbf{0}\}.
	\]
	
	On the other hand, since $\mathbf{M}$ is optimal, it must satisfy the first-order
	optimality condition
	\[
	\mathbf{0} \in \partial J(\mathbf{M}) + N_{\Omega}(\mathbf{M}),
	\]
	which holds componentwise.Without loss of generality assume that for the index $k$, $\displaystyle t_k =\frac{a_k+b_k}{2}$ $\in \operatorname{int}(C_k).$ In this case we obtain


	\begin{equation}\label{GHWPzerosum}
			\mathbf{0} = g_k + n_{k},
	\end{equation}
	where $g_k \in \partial_{t_k} J(\mathbf{M})$ and
	$n_{k} \in N_{C_k}(t_k)$.
	
	Since $t_k$ lies in the interior of $C_k$, we have $n_{k} = \mathbf{0}$,
	and hence $g_k = \mathbf{0}$. However, because all weights are strictly positive,
	the $k$-th subgradient of $J$ at $\mathbf{M}$ is given by
	\[
	g_k
	=
	\rho_{k-1}
	\frac{t_k - t_{k-1}}{\|t_k - t_{k-1}\|}
	+
	\rho_k
	\frac{t_k - t_{k+1}}{\|t_k - t_{k+1}\|}
	+
	\omega_k
	\frac{t_k - z}{\|t_k - z\|}.
	\]
	This expression is a weighted sum of unit vectors with strictly positive coefficients. Since sets are in general position, therefore by Theorem \ref{seperationtheorem}, $C_k$ and $\operatorname{conv}(C_{k-1},C_{k+1},S)$ can be separated by a hyperplane. In fact, the convex hull of $\{t_{k-1}, t_{k+1}, z\}$ can be strictly separated from the set $C_k$. Invoking Remarks \ref{rem:boundary_characterization} and \eqref{GHWPzerosum} such a sum cannot vanish, i.e., $g_k \neq 0$.
	This contradiction shows that two distinct optimal solutions cannot exist.
	Therefore, the optimal solution is unique.
\end{proof}

\subsection{Optimality Conditions}
\label{subsec:optimality_conditions}

We established existence and uniqueness under reasonable conditions, we now derive a complete characterization of optimal solutions via first-order optimality conditions. These conditions, expressed through subdifferential inclusions and normal cone equilibrium, provide both necessary and sufficient criteria for global optimality and admit elegant geometric interpretations as force-balance equations at each variable.

\begin{theorem}\label{thm:optimality_conditions}
Let $C_1, C_2, \ldots, C_m, \mathcal{S} \subset \mathbb{R}^n$ be nonempty, closed, and convex sets, which are pairwise disjoint that is $C_i \cap C_j = \emptyset~ \forall i \neq j$ and $C_i \cap \mathcal{S} = \emptyset~ \forall i$. Let $\boldsymbol{\rho} = (\rho_1, \ldots, \rho_m)$ and $\boldsymbol{\omega} = (\omega_1, \ldots, \omega_m)$ be nonnegative weight vectors. Then $\mathbf{u}^* = (a_1^*, a_2^*, \ldots, a_m^*, x^*) \in \Omega$ 	is a global optimal solution to the Generalized Heron--Waist Problem \eqref{Heron_Waist_Problem} if and only if there exist normal vectors
	\[
	n_{C_i} \in N_{C_i}(a_i^*) \quad \text{for each } i = 1, \ldots, m,
	\quad \text{and} \quad
	n_{\mathcal{S}} \in N_{\mathcal{S}}(x^*)
	\]
	satisfying the following equilibrium conditions:
	
	\medskip
	
	\noindent\textbf{(O1) Chain Variable Equilibrium:} For each $i = 1, 2, \ldots, m$,
	\begin{equation}
		\label{eq:chain_equilibrium}
		\rho_{i-1} \frac{a_i^* - a_{i-1}^*}{\|a_i^* - a_{i-1}^*\|}
		+ \rho_i \frac{a_i^* - a_{i+1}^*}{\|a_i^* - a_{i+1}^*\|}
		+ \omega_i \frac{a_i^* - x^*}{\|a_i^* - x^*\|}
		+ n_{C_i}
		= \mathbf{0},
	\end{equation}
	where indices are cyclic: $a_0^* := a_m^*$ and $a_{m+1}^* := a_1^*$.
	
	\medskip
	
	\noindent\textbf{(O2) Hub Variable Equilibrium:}
	\begin{equation}
		\label{eq:hub_equilibrium}
		\sum_{i=1}^{m} \omega_i \frac{x^* - a_i^*}{\|x^* - a_i^*\|} + n_{\mathcal{S}} = \mathbf{0}.
	\end{equation}
	
	\medskip
	
	\noindent\textbf{(O3) Global Normal Balance:}
	\begin{equation}
		\label{eq:global_balance}
		\sum_{i=1}^{m} n_{C_i} + n_{\mathcal{S}} = \mathbf{0}.
	\end{equation}
\end{theorem}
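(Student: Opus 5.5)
The plan is to rewrite the constrained problem \eqref{Heron_Waist_Problem} as the unconstrained convex problem of minimizing $\Phi := J + \delta_\Omega$ over $\mathbb{R}^{n(m+1)}$. By the definition of the subdifferential, $\mathbf{u}^*\in\Omega$ is a global minimizer of $\Phi$ if and only if $\mathbf{0}\in\partial\Phi(\mathbf{u}^*)$. One direction is immediate from the subgradient inequality with $v=\mathbf{0}$; the other holds because $\mathbf{0}$ satisfies that inequality precisely when $\mathbf{u}^*$ is a minimizer. This handles necessity and sufficiency simultaneously, so no separate argument for sufficiency is needed.

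Next I will split $\partial\Phi$ with the Moreau--Rockafellar Theorem~\ref{sumofsubgradient}. Its hypotheses hold because $J$ is a finite sum of norms of linear maps, hence convex and continuous on all of $\mathbb{R}^{n(m+1)}$, and $\delta_\Omega$ is closed and convex since $\Omega$ is closed and convex. This gives $\partial\Phi(\mathbf{u}^*)=\partial J(\mathbf{u}^*)+N_\Omega(\mathbf{u}^*)$, using \eqref{eq:indicator-subgrad}. Applying Proposition~\ref{Normalcone_Product} repeatedly (by induction on the number of factors) yields
\[
N_\Omega(\mathbf{u}^*)=N_{C_1}(a_1^*)\times\cdots\times N_{C_m}(a_m^*)\times N_{\mathcal{S}}(x^*).
\]

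The step requiring the most care is computing $\partial J(\mathbf{u}^*)$, and this is where the disjointness hypotheses enter. Since $a_i^*\in C_i$, $a_{i+1}^*\in C_{i+1}$ and $C_i\cap C_{i+1}=\emptyset$, we have $a_i^*\neq a_{i+1}^*$. Likewise $a_i^*\neq x^*$ because $C_i\cap\mathcal{S}=\emptyset$. Each term $\rho_i\|a_i-a_{i+1}\|$ and $\omega_i\|a_i-x\|$ is therefore differentiable at $\mathbf{u}^*$. Terms with zero weight contribute nothing, so they cause no difficulty. Consequently $J$ is differentiable at $\mathbf{u}^*$, and $\partial J(\mathbf{u}^*)=\{\nabla J(\mathbf{u}^*)\}$ by Remark~\ref{remark:subgradient_equality} and Theorem~\ref{sumofsubgradient}. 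Each block of the gradient is then read off term by term, following Example~\ref{ex:subdiff_distance} and \eqref{subgradientformula_forsingleton}:
\begin{itemize}
\item the $a_i$-block collects $\rho_{i-1}\frac{a_i^*-a_{i-1}^*}{\|a_i^*-a_{i-1}^*\|}$ from the edge $(i-1,i)$, $\rho_i\frac{a_i^*-a_{i+1}^*}{\|a_i^*-a_{i+1}^*\|}$ from the edge $(i,i+1)$, and $\omega_i\frac{a_i^*-x^*}{\|a_i^*-x^*\|}$ from the hub term;
\item the $x$-block is $\sum_i\omega_i\frac{x^*-a_i^*}{\|x^*-a_i^*\|}$.
\end{itemize}
I will track the cyclic indices carefully here. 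If $m=2$, the two edges join the same pair of points and simply add. Writing $\mathbf{0}\in\nabla J(\mathbf{u}^*)+N_\Omega(\mathbf{u}^*)$ componentwise then gives exactly (O1) and (O2), with normal vectors $n_{C_i}$ and $n_{\mathcal{S}}$ from the respective cones.

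Finally, (O3) is a consequence of (O1) and (O2) rather than an independent condition, so adding it does not change the equivalence. Summing (O1) over $i$ and adding (O2), reindex the $\rho_{i-1}$ terms cyclically. Each edge $(i,i+1)$ then contributes $\rho_i\bigl(\frac{a_{i+1}^*-a_i^*}{\|\cdot\|}+\frac{a_i^*-a_{i+1}^*}{\|\cdot\|}\bigr)=\mathbf{0}$. Each hub term $\omega_i\frac{a_i^*-x^*}{\|\cdot\|}$ cancels against the matching summand of (O2). What remains is $\sum_i n_{C_i}+n_{\mathcal{S}}=\mathbf{0}$.
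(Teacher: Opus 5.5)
Your proposal is correct and follows essentially the same route as the paper. You rewrite the problem as minimizing $J+\delta_\Omega$, apply Fermat's rule, the Moreau--Rockafellar sum rule and the product formula for $N_\Omega(\mathbf{u}^*)$, use the disjointness hypotheses to make every term of $J$ differentiable at $\mathbf{u}^*$ so the blocks read off as (O1) and (O2), and then sum these to get (O3). Your explicit checks of the sum-rule hypotheses and your use of $\partial J(\mathbf{u}^*)=\{\nabla J(\mathbf{u}^*)\}$ are slightly tighter than the paper's presentation, but the argument is the same.
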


\begin{proof}

	We reformulate this generalized Heron-Waist  constrained problem as an unconstrained problem by using the indicator function. The feasible region
$\Omega := C_1 \times C_2 \times \cdots \times C_m \times \mathcal{S}$
is nonempty, closed, and convex. Introducing the indicator function $\delta_{\Omega}$, the problem becomes
\begin{equation}
	\label{eq:unconstrained_form}
	\min_{\mathbf{u} \in \mathbb{R}^{n(m+1)}} \Phi(\mathbf{u}),
	\quad \text{where} \quad
	\Phi(\mathbf{u}) := J(\mathbf{u}) + \delta_{\Omega}(\mathbf{u}).
\end{equation}

Since both $J$ and $\delta_{\Omega}$ are convex, $\Phi$ is convex. By Fermat's rule in convex analysis, $\mathbf{u}^*$ is a global minimizer if and only if
\begin{equation}
	\label{eq:fermat_rule}
	\mathbf{0} \in \partial \Phi(\mathbf{u}^*).
\end{equation}

By the subdifferential sum rule (Theorem~\ref{sumofsubgradient}),
\begin{equation}
	\label{eq:subdiff_sum}
	\partial \Phi(\mathbf{u}^*) = \partial J(\mathbf{u}^*) + \partial \delta_{\Omega}(\mathbf{u}^*).
\end{equation}

Recall that, the subdifferential of the indicator function equals the normal cone:
\begin{equation}
	\label{eq:indicator_normal}
	\partial \delta_{\Omega}(\mathbf{u}^*) = N_{\Omega}(\mathbf{u}^*).
\end{equation}

By the product structure of $\Omega$ and Proposition~\ref{Normalcone_Product},
\begin{equation}
	\label{eq:normal_product}
	N_{\Omega}(\mathbf{u}^*) = N_{C_1}(a_1^*) \times N_{C_2}(a_2^*) \times \cdots \times N_{C_m}(a_m^*) \times N_{\mathcal{S}}(x^*).
\end{equation}

Therefore, any element $\mathbf{n} \in N_{\Omega}(\mathbf{u}^*)$ has the block structure
\begin{equation}
	\label{eq:normal_structure}
	\mathbf{n} = (n_{C_1}, n_{C_2}, \ldots, n_{C_m}, n_{\mathcal{S}}),
\end{equation}
where $n_{C_i} \in N_{C_i}(a_i^*)$ for $i = 1, \ldots, m$ and $n_{\mathcal{S}} \in N_{\mathcal{S}}(x^*)$.

We now compute $\partial J(\mathbf{u}^*)$. The objective function is
\[
J(\mathbf{u}) = \sum_{i=1}^{m} \rho_i \|a_i - a_{i+1}\| + \sum_{i=1}^{m} \omega_i \|a_i - x\|.
\]

For each term in the objective, we must compute its subdifferential with respect to the composite variable $\mathbf{u} = (a_1, \ldots, a_m, x)$. Consider first a perimeter term $\rho_i \|a_i - a_{i+1}\|$ for a fixed index $i$. By Proposition~\ref{subgradientformula}, Remark~\ref{remark:subgradient_equality}  and Example~\ref{ex:subdiff_distance}. Since by assumption $a_i^* \neq a_{i+1}^*$, the subdifferential of the Euclidean norm $\|a_i - a_{i+1}\|$ with respect to the composite variable $\mathbf{u}$ has the form of a vector in $\mathbb{R}^{n(m+1)}$ with nonzero components only in the positions corresponding to $a_i$ and $a_{i+1}$:
\begin{equation}
	\label{eq:perimeter_subdiff}
	\partial_{a_i} [\|a_i - a_{i+1}\|] = \left( \mathbf{0}, \ldots, \mathbf{0}, \underbrace{\frac{a_i - a_{i+1}}{\|a_i - a_{i+1}\|}}_{i\text{-th position}}, \underbrace{\frac{a_{i+1} - a_i}{\|a_{i+1} - a_i\|}}_{(i+1)\text{-th position}}, \mathbf{0}, \ldots, \mathbf{0} \right).
\end{equation}

Multiplying by the weight $\rho_i \geq 0$, the subdifferential of the $i$-th perimeter term becomes
\begin{equation}
	\label{eq:weighted_perimeter}
	\partial_{a_i} [\rho_i \|a_i - a_{i+1}\|] = \left( \mathbf{0}, \ldots, \mathbf{0}, \rho_i \frac{a_i - a_{i+1}}{\|a_i - a_{i+1}\|}, \rho_i \frac{a_{i+1} - a_i}{\|a_{i+1} - a_i\|}, \mathbf{0}, \ldots, \mathbf{0} \right).
\end{equation}

Similarly, for a radial term $\omega_i \|a_i - x\|$, assuming $a_i^* \neq x^*$, the subdifferential has nonzero components in the position corresponding to $a_i$ and in the final position corresponding to the hub variable $x$:
\begin{equation}
	\label{eq:radial_subdiff}
	\partial_x [\omega_i \|a_i - x\|] = \left( \mathbf{0}, \ldots, \mathbf{0}, \underbrace{\omega_i \frac{a_i - x}{\|a_i - x\|}}_{i\text{-th position}}, \mathbf{0}, \ldots, \mathbf{0}, \underbrace{\omega_i \frac{x - a_i}{\|x - a_i\|}}_{\text{hub position}} \right).
\end{equation}

Consequently,
\begin{align}
	\label{eq:full_subdiff_expansion}
	\partial J(\mathbf{u}^*) 
	&= \partial \left[ \sum_{i=1}^{m} \rho_i \|a_i - a_{i+1}\| + \sum_{i=1}^{m} \omega_i \|a_i - x\| \right] \notag \\[6pt]
	&= \rho_1 \left( \frac{a_1^* - a_2^*}{\|a_1^* - a_2^*\|}, \frac{a_2^* - a_1^*}{\|a_2^* - a_1^*\|}, \mathbf{0}, \ldots, \mathbf{0} \right) \notag \\[6pt]
	&\quad + \rho_2 \left( \mathbf{0}, \frac{a_2^* - a_3^*}{\|a_2^* - a_3^*\|}, \frac{a_3^* - a_2^*}{\|a_3^* - a_2^*\|}, \mathbf{0}, \ldots, \mathbf{0} \right) \notag \\[6pt]
	&\quad + \rho_3 \left( \mathbf{0}, \mathbf{0}, \frac{a_3^* - a_4^*}{\|a_3^* - a_4^*\|}, \frac{a_4^* - a_3^*}{\|a_4^* - a_3^*\|}, \mathbf{0}, \ldots, \mathbf{0} \right) \notag \\[6pt]
	&\quad + \cdots \notag \\[6pt]
	&\quad + \rho_m \left( \frac{a_m^* - a_1^*}{\|a_m^* - a_1^*\|}, \mathbf{0}, \ldots, \mathbf{0}, \frac{a_1^* - a_m^*}{\|a_1^* - a_m^*\|} \right) \notag \\[6pt]
	&\quad + \omega_1 \left( \frac{a_1^* - x^*}{\|a_1^* - x^*\|}, \mathbf{0}, \ldots, \mathbf{0}, \frac{x^* - a_1^*}{\|x^* - a_1^*\|} \right) \notag \\[6pt]
	&\quad + \omega_2 \left( \mathbf{0}, \frac{a_2^* - x^*}{\|a_2^* - x^*\|}, \mathbf{0}, \ldots, \mathbf{0}, \frac{x^* - a_2^*}{\|x^* - a_2^*\|} \right) \notag \\[6pt]
	&\quad + \cdots \notag \\[6pt]
	&\quad + \omega_m \left( \mathbf{0}, \ldots, \mathbf{0}, \frac{a_m^* - x^*}{\|a_m^* - x^*\|}, \frac{x^* - a_m^*}{\|x^* - a_m^*\|} \right).
\end{align}

Let $\mathbf{0}$ be represented as a $(m+1)$ tuple of zero vectors of dimension $n$:
\[
\mathbf{0} = (\mathbf{0}_n, \mathbf{0}_n, \ldots, \mathbf{0}_n) \in \mathbb{R}^{n \times (m+1)}.
\]

The optimality condition \eqref{eq:fermat_rule} now requires that for some $(n_{C_1}, n_{C_2}, \ldots, n_{C_m}, n_{\mathcal{S}})$ $\in N_\Omega(\mathbf{u}^*)$
\begin{equation}
	\label{eq:optimality_with_normal}
\mathbf{0} \in	\partial J(\mathbf{u}^*) + (n_{C_1}, n_{C_2}, \ldots, n_{C_m}, n_{\mathcal{S}}),
\end{equation}
or equivalently,
\begin{equation}
	\label{eq:optimality_equation}
\mathbf{0} =	\partial J(\mathbf{u}^*) + (n_{C_1}, n_{C_2}, \ldots, n_{C_m}, n_{\mathcal{S}}).
\end{equation}

Substituting \eqref{eq:full_subdiff_expansion} into the optimality condition \eqref{eq:optimality_equation}, we obtain the explicit tuple equality:
\begin{equation}
	\label{eq:tuple_equality_full}
	\begin{aligned}
		(\mathbf{0}_n, \mathbf{0}_n, \ldots, \mathbf{0}_n, \mathbf{0}_n) = \bigg(
		&\rho_m \frac{a_1^* - a_m^*}{\|a_1^* - a_m^*\|} + \rho_1 \frac{a_1^* - a_2^*}{\|a_1^* - a_2^*\|} + \omega_1 \frac{a_1^* - x^*}{\|a_1^* - x^*\|}, \\[4pt]
		&\rho_1 \frac{a_2^* - a_1^*}{\|a_2^* - a_1^*\|} + \rho_2 \frac{a_2^* - a_3^*}{\|a_2^* - a_3^*\|} + \omega_2 \frac{a_2^* - x^*}{\|a_2^* - x^*\|}, \\[4pt]
		&\qquad \vdots \\[4pt]
		&\rho_{m-1} \frac{a_m^* - a_{m-1}^*}{\|a_m^* - a_{m-1}^*\|} + \rho_m \frac{a_m^* - a_1^*}{\|a_m^* - a_1^*\|} + \omega_m \frac{a_m^* - x^*}{\|a_m^* - x^*\|}, \\[4pt]
		&\sum_{i=1}^{m} \omega_i \frac{x^* - a_i^*}{\|x^* - a_i^*\|}
		\bigg) \\[6pt]
		&\qquad\qquad + (n_{C_1}, n_{C_2}, \ldots, n_{C_m}, n_{\mathcal{S}}).
	\end{aligned}
\end{equation}

For this tuple equality to hold, each component must individually equal the zero vector $\mathbf{0}_n \in \mathbb{R}^n$. This componentwise requirement yields the following system of equilibrium equations.

\medskip
\noindent\textbf{Component 1 ($a_1^*$ equilibrium):}
\[
\mathbf{0}_n = \rho_m \frac{a_1^* - a_m^*}{\|a_1^* - a_m^*\|} + \rho_1 \frac{a_1^* - a_2^*}{\|a_1^* - a_2^*\|} + \omega_1 \frac{a_1^* - x^*}{\|a_1^* - x^*\|} + n_{C_1}.
\]

\noindent\textbf{Component 2 ($a_2^*$ equilibrium):}
\[
\mathbf{0}_n = \rho_1 \frac{a_2^* - a_1^*}{\|a_2^* - a_1^*\|} + \rho_2 \frac{a_2^* - a_3^*}{\|a_2^* - a_3^*\|} + \omega_2 \frac{a_2^* - x^*}{\|a_2^* - x^*\|} + n_{C_2}.
\]

\noindent\textbf{In general, component $i$ ($a_i^*$ equilibrium):}
For each $i = 1, 2, \ldots, m$, we have
\begin{equation}
	\label{eq:chain_equilibrium_derived}
	\mathbf{0}_n = \rho_{i-1} \frac{a_i^* - a_{i-1}^*}{\|a_i^* - a_{i-1}^*\|} + \rho_i \frac{a_i^* - a_{i+1}^*}{\|a_i^* - a_{i+1}^*\|} + \omega_i \frac{a_i^* - x^*}{\|a_i^* - x^*\|} + n_{C_i},
\end{equation}
where indices are cyclic: $a_0^* := a_m^*$ and $a_{m+1}^* := a_1^*$. This is precisely condition \textbf{(O1)}.

\medskip
\noindent\textbf{Component $m+1$ (hub $x^*$ equilibrium):}
\begin{equation}
	\label{eq:hub_equilibrium_derived}
	\mathbf{0}_n = \sum_{i=1}^{m} \omega_i \frac{x^* - a_i^*}{\|x^* - a_i^*\|} + n_{\mathcal{S}}.
\end{equation}
This is precisely condition \textbf{(O2)}.

\medskip
\noindent\textbf{Deriving the global balance condition (O3):}
To establish the global equilibrium, we sum equations \eqref{eq:chain_equilibrium_derived} over all chain vertices $i = 1, \ldots, m$ and add equation \eqref{eq:hub_equilibrium_derived}. This yields:
\begin{equation}
	\label{eq:sum_all_equations}
	\sum_{i=1}^{m} \left[ \rho_{i-1} \frac{a_i^* - a_{i-1}^*}{\|a_i^* - a_{i-1}^*\|} + \rho_i \frac{a_i^* - a_{i+1}^*}{\|a_i^* - a_{i+1}^*\|} + \omega_i \frac{a_i^* - x^*}{\|a_i^* - x^*\|} + n_{C_i} \right] + \sum_{i=1}^{m} \omega_i \frac{x^* - a_i^*}{\|x^* - a_i^*\|} + n_{\mathcal{S}} = \mathbf{0}_n.
\end{equation}

We now observe crucial cancellations due to the geometric structure:

\begin{equation}
	\label{eq:perimeter_cancel}
	\sum_{i=1}^{m} \left[ \rho_i \frac{a_i^* - a_{i+1}^*}{\|a_i^* - a_{i+1}^*\|} + \rho_i \frac{a_{i+1}^* - a_i^*}{\|a_{i+1}^* - a_i^*\|} \right] = \mathbf{0}_n.
\end{equation}

\begin{equation}
	\label{eq:radial_cancel}
	\sum_{i=1}^{m} \omega_i \frac{a_i^* - x^*}{\|a_i^* - x^*\|} + \sum_{i=1}^{m} \omega_i \frac{x^* - a_i^*}{\|x^* - a_i^*\|} = \mathbf{0}_n.
\end{equation}

After these cancellations in \eqref{eq:sum_all_equations}, only the normal cone contributions remain:
\begin{equation}
	\label{eq:global_balance_derived}
	\sum_{i=1}^{m} n_{C_i} + n_{\mathcal{S}} = \mathbf{0}_n.
\end{equation}
This is precisely condition \textbf{(O3)}, expressing the global balance of constraint forces.

By Fermat's rule, in convex setting, this establishes that $\mathbf{u}^*$ is a global minimizer of $J$ over $\Omega$. Therefore, conditions (O1), (O2), and (O3) are both necessary and sufficient for global optimality.

\end{proof}

\section{Computational Algorithms}
\label{algorithms}

The optimality conditions derived in Section~\ref{subsec:optimality_conditions} provide a theoretical characterization of the solution structure for the Generalized Heron--Waist Problem. To approximate such optimal points, we now design an iterative procedure based on the subgradient information and projection operators. In the following, we present the \emph{Projected Subgradient Algorithm (PSA)} for our proposed problem \eqref{Heron_Waist_Problem}, and establish its convergence properties, and subsequently demonstrate its performance through various numerical illustrations.

\subsection{Convergence Analysis}
\label{subsec:convergence_analysis}

Before presenting the convergence theorem, we first establish that the subgradients of our objective function are uniformly bounded. This is a standard assumption in the analysis of subgradient-based methods (see, for example,~\cite[Assumption 3.2.1, p.~153]{bertsekas2015convex}). Formally, there exists a constant $G > 0$ such that $\|g^{(k)}\| \le G$ for all iterations $k$,  where $g^{(k)} \in \partial J(u^{(k)})$. This assumption guarantees the stability of the projected subgradient updates and is automatically satisfied when the objective function is Lipschitz continuous with Lipschitz constant~$G$.

\begin{remark}[Boundedness of Subgradients]
	\label{rem:subgrad_bound}
	For our Generalized Heron--Waist Problem \eqref{eq:objective_ghwp}, this boundedness can be verified explicitly. Consider the objective function
	\[
	J(\mathbf{u}) = \sum_{i=1}^{m} \rho_i \|a_i - a_{i+1}\| + \sum_{i=1}^{m} \omega_i \|a_i - x\|, \quad \mathbf{u} = (a_1, \ldots, a_m, x).
	\]
	
	At each iteration $k$, the vector $g^{(k)}$ is a subgradient of the convex function $J$ at $\mathbf{u}^{(k)}$, given by
	\[
	g^{(k)} = \big(g_{a_1}^{(k)}, g_{a_2}^{(k)}, \ldots, g_{a_m}^{(k)}, g_x^{(k)}\big),
	\]
	where each component is computed according to the subdifferential calculus established in Theorem~\ref{thm:optimality_conditions}. Specifically, for each chain variable $a_i$ at iteration $k$:
	\begin{equation}
		\label{eq:ga}
		g_{a_i}^{(k)} =
		\begin{cases}
			\rho_{i-1}\dfrac{a_i^{(k)} - a_{i-1}^{(k)}}{\|a_i^{(k)} - a_{i-1}^{(k)}\|}
			+ \rho_i\dfrac{a_i^{(k)} - a_{i+1}^{(k)}}{\|a_i^{(k)} - a_{i+1}^{(k)}\|} \\[0.8em]
			\qquad + \omega_i\dfrac{a_i^{(k)} - x^{(k)}}{\|a_i^{(k)} - x^{(k)}\|},
			& \text{if all denominators} \neq 0, \\[0.8em]
			\mathbf{0}_n, & \text{otherwise},
		\end{cases}
	\end{equation}
	for $i = 1, \ldots, m$, with cyclic indexing $a_0^{(k)} := a_m^{(k)}$ and $a_{m+1}^{(k)} := a_1^{(k)}$. For the hub variable $x$:
	\begin{equation}
		\label{eq:gx}
		g_x^{(k)} = \sum_{i=1}^{m} \omega_i 
		\begin{cases}
			\dfrac{x^{(k)} - a_i^{(k)}}{\|x^{(k)} - a_i^{(k)}\|}, & \text{if } x^{(k)} \neq a_i^{(k)}, \\[0.6em]
			\mathbf{0}_n, & \text{otherwise}.
		\end{cases}
	\end{equation}
	
	Since each term \eqref{eq:ga} and \eqref{eq:gx} $\frac{a_i - a_{i-1}}{\|a_i - a_{i-1}\|}$ is a unit vector or zero vector, and similarly for all other terms, we can bound each component. For the chain variables, each $g_{a_i}^{(k)}$ is a weighted sum of at most three unit vectors:
	\[
	\|g_{a_i}^{(k)}\| \le \rho_{i-1} + \rho_i + \omega_i \le \max_{1 \le j \le m}(\rho_j + \rho_{j+1}) + \max_{1 \le j \le m}\omega_j.
	\]
	For the hub variable:
	\[
	\|g_x^{(k)}\| \le \sum_{i=1}^{m} \omega_i = \|\boldsymbol{\omega}\|_1,
	\]
	where $\|\boldsymbol{\omega}\|_1$ denotes the $\ell^1$-norm of the weight vector $\boldsymbol{\omega} =(\omega_1, \omega_2, \ldots, \omega_m)$.
	
	Consequently, the subgradient norm is bounded, and we have
	\begin{align*}
		\|g^{(k)}\|^2 
		&= \sum_{i=1}^{m} \|g_{a_i}^{(k)}\|^2 + \|g_x^{(k)}\|^2 \\
		&\le m \left[\max_{1 \le j \le m}(\rho_j + \rho_{j+1}) + \max_{1 \le j \le m}\omega_j\right]^2 + \left(\sum_{i=1}^{m} \omega_i\right)^2,
	\end{align*}
	and thus every subgradient of $J$ obeys
	\[
	\|g^{(k)}\| \le G := \sqrt{m \left[\max_{1 \le j \le m}(\rho_j + \rho_{j+1}) + \max_{1 \le j \le m}\omega_j\right]^2 + \left(\sum_{i=1}^{m} \omega_i\right)^2}.
	\]
	
	This explicit constant $G$ plays the same role as the bound $c$ in the classical subgradient analysis~\cite[Proposition 3.2.1, p.~153]{bertsekas2015convex}, ensuring the Projected Subgradient Algorithm convergence properties established below.  
\end{remark}

Here is the main convergence result for the projected subgradient algorithm applied to the Generalized Heron--Waist Problem. Without loss of generality we assume that all weight are positive.


\begin{theorem}\label{thm:psa_convergence}
	Let $C_1, C_2, \ldots, C_m, \mathcal{S}$ be nonempty, closed, and convex subsets of $\mathbb{R}^n$ which are pairwise disjoint, and consider the Generalized Heron--Waist Problem
	\[
	\min_{\mathbf{u} \in \Omega} J(\mathbf{u}) = \sum_{i=1}^{m} \rho_i \|a_i - a_{i+1}\| + \sum_{i=1}^{m} \omega_i \|a_i - x\|,
	\quad 
	\Omega = C_1 \times C_2 \times \cdots \times C_m \times \mathcal{S}.
	\]
	Assume that all weights $\rho_{i}$ and $\omega_i$ are strictly positive. Further assume that at least one of the sets among $\{C_i\}$ and $\mathcal{S}$ is bounded, guaranteeing the existence of an optimal solution $\mathbf{u}^* \in \Omega$ with optimal value $J^* := J(\mathbf{u}^*)$.
	
	Let $\{\mathbf{u}^{(k)}\}$ be the sequence generated by the \emph{Projected Subgradient Algorithm}:
	\begin{equation}
		\label{eq:psa_update}
		\mathbf{u}^{(k+1)} = \proj_{\Omega}\big(\mathbf{u}^{(k)} - \alpha_k g^{(k)}\big),
	\end{equation}
	where $g^{(k)} \in \partial J(\mathbf{u}^{(k)})$ is computed according to \eqref{eq:ga}--\eqref{eq:gx}, $\proj_{\Omega}$ denotes the Euclidean projection onto $\Omega$, and the step-size sequence $\{\alpha_k\}$ satisfies
	\[
	\alpha_k > 0, \qquad 
	\sum_{k=0}^{\infty} \alpha_k = \infty, \qquad 
	\sum_{k=0}^{\infty} \alpha_k^2 < \infty.
	\]
	
	Then the best-iterate sequence of objective values
	\[
	J_{\mathrm{best}}^{(N)} := \min_{0 \le k < N} J(\mathbf{u}^{(k)})
	\]
	converges to the optimal value, i.e., $\lim_{N \to \infty} J_{\mathrm{best}}^{(N)} = J^*$, and $\mathbf{u}^{(k)}$ converges to an optimal solution.
\end{theorem}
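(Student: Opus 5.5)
The plan is the classical projected-subgradient analysis, in the spirit of \cite[Proposition 3.2.1]{bertsekas2015convex}, followed by a quasi-Fej\'er argument to get convergence of the iterates themselves.

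\textbf{Step 1: the update uses a genuine subgradient.} Since $a_i^{(k)}\in C_i$, $x^{(k)}\in\mathcal{S}$, and the sets are pairwise disjoint, no denominator in \eqref{eq:ga}--\eqref{eq:gx} vanishes. So every norm term is differentiable at $\mathbf{u}^{(k)}$. By Example~\ref{ex:subdiff_distance} and the sum rule (Theorem~\ref{sumofsubgradient}), $g^{(k)}=\nabla J(\mathbf{u}^{(k)})\in\partial J(\mathbf{u}^{(k)})$. Remark~\ref{rem:subgrad_bound} then gives $\|g^{(k)}\|\le G$.

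\textbf{Step 2: the basic inequality.} Fix any optimal $\mathbf{u}^*$, which exists by Corollary~\ref{cor:uniform_positive}. Since $\mathbf{u}^*=\proj_\Omega(\mathbf{u}^*)$ and $\proj_\Omega$ is nonexpansive,
\[
\|\mathbf{u}^{(k+1)}-\mathbf{u}^*\|^2\le \|\mathbf{u}^{(k)}-\mathbf{u}^*\|^2-2\alpha_k\langle g^{(k)},\mathbf{u}^{(k)}-\mathbf{u}^*\rangle+\alpha_k^2G^2 .
\]
The subgradient inequality gives $\langle g^{(k)},\mathbf{u}^{(k)}-\mathbf{u}^*\rangle\ge J(\mathbf{u}^{(k)})-J^*\ge 0$. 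Hence
\[
\|\mathbf{u}^{(k+1)}-\mathbf{u}^*\|^2\le \|\mathbf{u}^{(k)}-\mathbf{u}^*\|^2-2\alpha_k\bigl(J(\mathbf{u}^{(k)})-J^*\bigr)+\alpha_k^2G^2. \tag{$\ast$}
\]

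\textbf{Step 3: convergence of the best values.} Summing $(\ast)$ from $k=0$ to $N-1$ gives
\[
2\sum_{k<N}\alpha_k\bigl(J(\mathbf{u}^{(k)})-J^*\bigr)\le \|\mathbf{u}^{(0)}-\mathbf{u}^*\|^2+G^2\sum_k\alpha_k^2<\infty .
\]
Also $\sum_{k<N}\alpha_k\bigl(J(\mathbf{u}^{(k)})-J^*\bigr)\ge \bigl(J^{(N)}_{\mathrm{best}}-J^*\bigr)\sum_{k<N}\alpha_k$. Dividing by $\sum_{k<N}\alpha_k\to\infty$ yields $J^{(N)}_{\mathrm{best}}\to J^*$. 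The same summability shows $\liminf_k J(\mathbf{u}^{(k)})=J^*$, since otherwise $\sum_k\alpha_k\bigl(J(\mathbf{u}^{(k)})-J^*\bigr)$ would diverge.

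\textbf{Step 4: convergence of the iterates (the main obstacle).} Dropping the nonpositive middle term in $(\ast)$ gives, for every optimal $\mathbf{u}^*$,
\[
\|\mathbf{u}^{(k+1)}-\mathbf{u}^*\|^2\le\|\mathbf{u}^{(k)}-\mathbf{u}^*\|^2+\alpha_k^2G^2 ,
\]
with $\sum_k\alpha_k^2<\infty$. The sequence $\|\mathbf{u}^{(k)}-\mathbf{u}^*\|^2+G^2\sum_{j\ge k}\alpha_j^2$ is therefore nonincreasing and bounded below. So $\|\mathbf{u}^{(k)}-\mathbf{u}^*\|$ converges, and $\{\mathbf{u}^{(k)}\}$ is bounded.

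By Step 3, choose a subsequence $\mathbf{u}^{(k_j)}$ with $J(\mathbf{u}^{(k_j)})\to J^*$. Using boundedness, refine it so that it converges to some $\bar{\mathbf{u}}$. Then $\bar{\mathbf{u}}\in\Omega$ because $\Omega$ is closed, and $J(\bar{\mathbf{u}})=J^*$ by continuity of $J$, so $\bar{\mathbf{u}}$ is optimal.

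Now apply the Fej\'er property with $\mathbf{u}^*=\bar{\mathbf{u}}$. The limit of $\|\mathbf{u}^{(k)}-\bar{\mathbf{u}}\|$ exists, and along $k_j$ it equals $0$. Hence $\mathbf{u}^{(k)}\to\bar{\mathbf{u}}$.

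The delicate point is that the Fej\'er limit must be available for every optimal point, not only the one fixed in Step 2. This is why inequality $(\ast)$ is stated with an arbitrary $\mathbf{u}^*$. The uniqueness result (Theorem~\ref{thm:uniqueness}) is not needed for this argument.
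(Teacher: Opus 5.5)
Your proof is correct and follows the paper's route. Nonexpansiveness of $\proj_\Omega$ and the subgradient inequality give the basic recursion, summing it gives the best-iterate bound, and a quasi-Fej\'er argument gives convergence of the iterates. The only difference is that you carry out the Fej\'er step explicitly: you show $\|\mathbf{u}^{(k)}-\mathbf{u}^*\|$ converges for every optimal $\mathbf{u}^*$ and extract an optimal cluster point from $\liminf_k J(\mathbf{u}^{(k)})=J^*$, whereas the paper just invokes Propositions A.4.4/A.4.6 of \cite{bertsekas2015convex}.
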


\begin{proof}
	The existence of an optimal solution $\mathbf{u}^* \in \Omega$ follows from Theorem~\ref{existencetheorem}. By the nonexpansiveness of Euclidean projections onto closed convex sets~\cite[Proposition 3.2.1, p.~149]{bertsekas2015convex},
	\begin{equation}
		\label{eq:convergence_equation}
		\|\mathbf{u}^{(k+1)} - \mathbf{u}^*\|^2
		= \|\proj_{\Omega}(\mathbf{u}^{(k)} - \alpha_k g^{(k)}) - \proj_{\Omega}(\mathbf{u}^*)\|^2
		\le \|\mathbf{u}^{(k)} - \alpha_k g^{(k)} - \mathbf{u}^*\|^2.
	\end{equation}
	
	Expanding the right-hand side and applying the subgradient inequality $J(\mathbf{u}^{(k)}) - J(\mathbf{u}^*) \le \langle g^{(k)}, \mathbf{u}^{(k)} - \mathbf{u}^* \rangle$ gives
	\begin{align}
		\|\mathbf{u}^{(k)} - \alpha_k g^{(k)} - \mathbf{u}^*\|^2 
		&= \|\mathbf{u}^{(k)} - \mathbf{u}^*\|^2 - 2\alpha_k \langle g^{(k)}, \mathbf{u}^{(k)} - \mathbf{u}^* \rangle + \alpha_k^2 \|g^{(k)}\|^2 \notag \\
		&\le \|\mathbf{u}^{(k)} - \mathbf{u}^*\|^2 - 2\alpha_k [J(\mathbf{u}^{(k)}) - J^*] + \alpha_k^2 \|g^{(k)}\|^2. \label{eq:basic_rec}
	\end{align}
	
	Combining \eqref{eq:convergence_equation} and \eqref{eq:basic_rec} yields
	\begin{equation}
		\label{eq:distance_recursion}
		\|\mathbf{u}^{(k+1)} - \mathbf{u}^*\|^2
		\le \|\mathbf{u}^{(k)} - \mathbf{u}^*\|^2 - 2\alpha_k [J(\mathbf{u}^{(k)}) - J^*] + \alpha_k^2 \|g^{(k)}\|^2.
	\end{equation}
	
	Summing \eqref{eq:distance_recursion} from $k = 0$ to $N-1$ and using the subgradient bound $\|g^{(k)}\| \le G$ from Remark~\ref{rem:subgrad_bound} yields
	\begin{equation}
		\label{eq:sum_bound}
		\sum_{k=0}^{N-1} \alpha_k [J(\mathbf{u}^{(k)}) - J^*]
		\le \frac{1}{2}\|\mathbf{u}^{(0)} - \mathbf{u}^*\|^2 + \frac{G^2}{2}\sum_{k=0}^{N-1} \alpha_k^2.
	\end{equation}
	
	Let $J_{\mathrm{best}}^{(N)} = \min_{0 \le k < N} J(\mathbf{u}^{(k)})$. Since $J(\mathbf{u}^{(k)}) - J^* \ge J_{\mathrm{best}}^{(N)} - J^*$ for all $k < N$, equation \eqref{eq:sum_bound} implies
	\[
	\left(\sum_{k=0}^{N-1} \alpha_k\right) [J_{\mathrm{best}}^{(N)} - J^*]
	\le \sum_{k=0}^{N-1} \alpha_k [J(\mathbf{u}^{(k)}) - J^*]
	\le \frac{1}{2}\|\mathbf{u}^{(0)} - \mathbf{u}^*\|^2 + \frac{G^2}{2}\sum_{k=0}^{N-1} \alpha_k^2.
	\]
	
	Therefore,
	\[
	0 \le J_{\mathrm{best}}^{(N)} - J^*
	\le \frac{\|\mathbf{u}^{(0)} - \mathbf{u}^*\|^2 + G^2\sum_{k=0}^{N-1}\alpha_k^2}{2\sum_{k=0}^{N-1}\alpha_k}.
	\]
	
	Under the step-size assumptions~\cite[Proposition 3.2.6, p.~157]{bertsekas2015convex}, $\sum_{k=0}^{\infty} \alpha_k = \infty$ and $\sum_{k=0}^{\infty} \alpha_k^2 < \infty$, the numerator remains bounded while the denominator diverges. Therefore,
	\[
	\lim_{N \to \infty} [J_{\mathrm{best}}^{(N)} - J^*] = 0,
	\]
	that is, $J_{\mathrm{best}}^{(N)} \to J^*$.
	
	To establish convergence of the iterates $\mathbf{u}^{(k)}$ to an optimal solution, we apply Proposition A.4.4~\cite[p.~462]{bertsekas2015convex} or Proposition A.4.6~\cite[p.~465]{bertsekas2015convex}. In particular, identifying $\beta_k$, $\gamma_k$, and $\delta_k$ in \eqref{eq:distance_recursion}, and setting $\phi(\mathbf{u}^{(k)}, \mathbf{u}^*) = J(\mathbf{u}^{(k)}) - J(\mathbf{u}^*)$, we see that $\mathbf{u}^{(k)}$ converges to some optimal point $\mathbf{u}^*$ satisfying $J(\mathbf{u}^*) = J^*$.
\end{proof}

The complete iterative procedure of the PSA is summarized in Algorithm~\ref{alg:psa_ghwp}.

\begin{algorithm}[H]
	\caption{Projected Subgradient Algorithm (PSA) for the Generalized Heron--Waist Problem}
	\label{alg:psa_ghwp}
	
	\KwIn{
		Nonempty, closed, convex sets $C_1,\ldots,C_m,\mathcal S \subset \mathbb{R}^n$;\\
		Nonnegative weights $\boldsymbol{\rho}=(\rho_1,\ldots,\rho_m)$, 
		$\boldsymbol{\omega}=(\omega_1,\ldots,\omega_m)$;\\
		Initial feasible point $\mathbf{u}^{(0)}=(a_1^{(0)},\ldots,a_m^{(0)},x^{(0)})\in\Omega$;\\
		Tolerance $\varepsilon>0$, maximum iterations $K_{\max}$.
	}
	
	\KwOut{
		Approximate optimal solution $\mathbf{u}^*$ and objective value $J^*$.
	}
	
	\BlankLine
	\textbf{Initialization:}  
	Set $k\leftarrow1$,  
	$\mathbf{u}_{\mathrm{best}}\leftarrow\mathbf{u}^{(0)}$,  
	$J_{\mathrm{best}}\leftarrow J(\mathbf{u}^{(0)})$\;
	
	\BlankLine
	\For{$k=1,2,\ldots,K_{\max}$}{
		
		\tcp{Subgradient computation}
		Compute $g_{a_i}^{(k)}$ using \eqref{eq:ga} for $i=1,\ldots,m$\;
		Compute $g_x^{(k)}$ using $\eqref{eq:gx}$\;
		
		$g^{(k)}\leftarrow(g_{a_1}^{(k)},\ldots,g_{a_m}^{(k)},g_x^{(k)})$\;
		
		\tcp{Step size}
		Set $\alpha_k\leftarrow\dfrac{1}{k}$\;
		
		\tcp{Subgradient step}
		$\tilde{\mathbf{u}}^{(k+1)}\leftarrow \mathbf{u}^{(k)}-\alpha_k g^{(k)}$\;
		
		\tcp{Projection onto the feasible set $\Omega$}
		$\mathbf{u}^{(k+1)}\leftarrow \proj_{\Omega}\!\bigl(\tilde{\mathbf{u}}^{(k+1)}\bigr)$\;
		
		\tcp{Objective update}
		$J^{(k+1)}\leftarrow J(\mathbf{u}^{(k+1)})$\;
		
		\If{$J^{(k+1)}<J_{\mathrm{best}}$}{
			$J_{\mathrm{best}}\leftarrow J^{(k+1)}$\;
			$\mathbf{u}_{\mathrm{best}}\leftarrow\mathbf{u}^{(k+1)}$\;
		}
		
		\If{$\|g^{(k)}\|<\varepsilon$ \textbf{or}
			$|J^{(k+1)}-J^{(k)}|<\varepsilon$}{
			\textbf{stop and return} $\mathbf{u}_{\mathrm{best}},J_{\mathrm{best}}$\;
		}
	}
\end{algorithm}

\section{Numerical Illustrations}
\label{numerical}

In this section, we validate the theoretical findings and convergence properties of the proposed algorithm through numerical experiments.
We present numerical illustrations that demonstrate the practical performance of the
Projected Subgradient Algorithm (PSA).
The examples are designed to highlight the behavior of the algorithm under different
weighting schemes and complex convex geometries, as well as to verify the existence,
uniqueness, and optimality properties established earlier.
We consider two representative examples: a planar configuration in
$\mathbb{R}^2$ and a spatial configuration in $\mathbb{R}^3$.
Together, these examples illustrate the scalability of the method and its robustness
across different dimensions.

\paragraph{Experimental Setup:}

All experiments were implemented in Python~3.11 on Google~Colab using an
Intel\textsuperscript{\textregistered} Xeon\textsuperscript{\textregistered}
CPU @~2.20\,GHz with 12.7\,GB of RAM.
The Projected Subgradient Algorithm described in Algorithm~\ref{alg:psa_ghwp} was employed with the diminishing step-size rule $\alpha_k = 1/k$, which satisfies the classical convergence conditions for
nonsmooth convex optimization:
\begin{equation}
	\alpha_k > 0, \quad \lim_{k\to\infty} \alpha_k = 0, \quad
	\sum_{k=1}^{\infty} \alpha_k = \infty, \quad \text{and} \quad
	\sum_{k=1}^{\infty} \alpha_k^2 < \infty.
\end{equation}
The stopping criterion was defined based on stagnation of the objective value.
Specifically, the algorithm terminates when $|J(u^{(k+1)}) - J(u^{(k)})| < 10^{-12},$ where $J(u)$ denotes the objective function defined in~\eqref{eq:objective_ghwp}.

\begin{example}\label{GHWPEX1}
We consider a two-dimensional instance of the Generalized Heron--Waist Problem
involving four chain constraint sets and a single hub constraint set.
The chain sets $C_1, \ldots, C_4 \subset \mathbb{R}^2$ are chosen as closed Euclidean
discs of unit radius, centered at
$c_1 = (7,-6), c_2 = (4,5), c_3 = (-3,4), c_4 = (-6,-4).$ The hub constraint set $\mathcal{S}$ is defined as a closed square centered at $(1,-1)$ with half-side length equal to $1$.
To examine the influence of different weights in the objective function, we
assign nonuniform weights to both the cyclic perimeter terms and the radial terms.
Specifically, the waist (chain) weights and the Heron (radial) weights are given by
$\boldsymbol{\rho} = (1,\,2,\,2,\,1)$  and $\boldsymbol{\omega} = (2,\,1,\,1,\,2),$ respectively.
\medskip
The Projected Subgradient Algorithm was initialized from the feasible configuration
$a_1^{(0)}=(8,-6)$, $a_2^{(0)}=(4,6)$, $a_3^{(0)}=(-3,5)$, $a_4^{(0)}=(-7,-4)$,
and $x^{(0)}=(2,-2)$.
The algorithm was then executed using the diminishing step-size rule
$\alpha_k = 1/k$ and a stopping tolerance of $10^{-12}$. The Projected Subgradient Algorithm converged to a high-precision optimal configuration
under the prescribed tolerance.
The optimal locations of the chain variables $\mathbf{a}_i^*$ together with the optimal
hub position $\mathbf{x}^*$ are reported in Table~\ref{tab:optimal_solution_ex1}.
The convergence behavior of the algorithm, including the iteration counts and the corresponding CPU times required to attain successive accuracy levels, is summarized in Table~\ref{tab:accuracy_cpu_ex1}.
For the tolerance level $10^{-12}$, the algorithm achieved convergence after $120{,}691$ iterations with a total computational time of $36.87$ seconds, yielding a final objective value of $J^* = 85.2772730788.$ Figure~\ref{fig:example1} illustrates the final optimal configuration, clearly depicting
the interaction between the convex constraint sets, the resulting optimal polygonal
chain, and the hub connections induced by the different weight.

\end{example}

\begin{figure}[htbp]
	\centering
	\includegraphics[height=8cm]{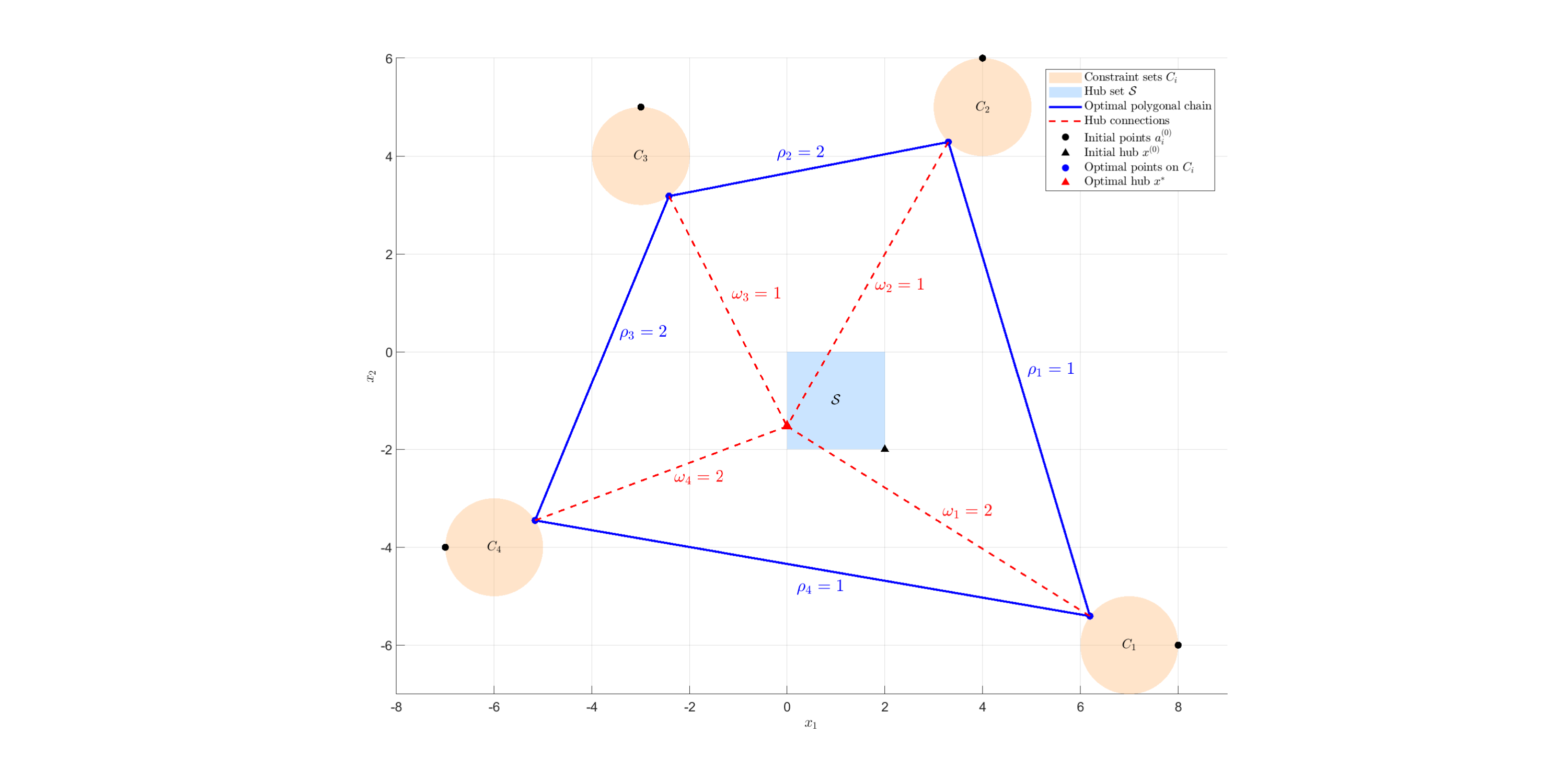}
	\caption{Final optimal configuration for Example~\ref{GHWPEX1} in $\mathbb{R}^2$.}
	\label{fig:example1}
\end{figure}

\begin{table}[H]
	\centering

	\begin{tabular}{ccc}
		\toprule
		\textbf{Variable} & \textbf{$x_1$-coordinate} & \textbf{$x_2$-coordinate} \\
		\midrule
		$a_1^*$ & 6.1952593003 & $-5.4063735128$ \\
		$a_2^*$ & 3.2987886747 & 4.2870465112 \\
		$a_3^*$ & $-2.4248918253$ & 3.1819226275 \\
		$a_4^*$ & $-5.1638552312$ & $-3.4514911799$ \\
		\midrule
		$x^*$   & 0.0000000000 & $-1.5290885465$ \\
		\bottomrule
	\end{tabular}
	\caption{Optimal chain points and hub location for Example~\ref{GHWPEX1}}
		\label{tab:optimal_solution_ex1}
\end{table}

\begin{table}[H]
	\centering
	\begin{tabular}{ccc}
		\toprule
		\textbf{Tolerance} & \textbf{Iterations} & \textbf{CPU Time (seconds)} \\
		\midrule
		$10^{-4}$  & 96       & 0.0416 \\
		$10^{-6}$  & 169      & 0.0720 \\
		$10^{-8}$  & 1{,}502  & 0.7497 \\
		$10^{-10}$ & 13{,}544 & 8.5284 \\
		$10^{-12}$ & 120{,}691 & 36.8669 \\
		\bottomrule
	\end{tabular}
	\caption{Convergence accuracy versus iteration count and CPU time for Example~\ref{GHWPEX1}.}
	\label{tab:accuracy_cpu_ex1}
\end{table}

\begin{example}\label{GHWPEX2}
	We next consider a three-dimensional instance of the Generalized Heron--Waist
	Problem in order to assess the performance of the proposed Projected
	Subgradient Algorithm in higher dimensions and under more intricate geometric
	constraints. The chain constraint sets $C_1,\ldots,C_5 \subset \mathbb{R}^3$ are modeled as
	axis-aligned cubes with half side length equal to $1$, whose centers are given by
	$(-2,-4,2)$, $(5,6,5)$, $(1,7,3)$, $(-4,2,-3)$, and $(6,-6,-2)$, respectively.
	Each cube therefore defines a compact and convex feasible region for the
	corresponding chain variable, introducing multiple boundary-induced geometric
	effects into the optimization process. 	The hub constraint set $\mathcal{S}$ is taken as a closed Euclidean sphere of radius $1$ centered at $(-1,2,1)$. In contrast to the polyhedral chain sets, this
	spherical hub introduces a smooth radial constraint that couples all chain
	variables through weighted distance interactions, thereby enriching the overall
	problem structure.

	To capture asymmetric interaction strengths, distinct weights are assigned to
	the cyclic chain terms and the hub connections. In particular, the waist (chain)
	weights and Heron (radial) weights are chosen as
	\[
\boldsymbol{\rho} = (1.5,\,1.1,\,1.2,\,0.9,\,0.8),
\qquad
\boldsymbol{\omega} = (1.0,\,1.3,\,1.5,\,1.0,\,0.95),
\] 
respectively. This choice induces asymmetric geometric forces along both the polygonal chain and the hub
	connections, allowing us to observe how the algorithm adapts to nonuniform
	weighting patterns.
	
	The Projected Subgradient Algorithm was initialized from the feasible
	configuration $a_1^{(0)}=(-1,-5,1)$, $a_2^{(0)}=(4,7,6)$, $a_3^{(0)}=(0,8,2)$,
	$a_4^{(0)}=(-5,3,-4)$, $a_5^{(0)}=(7,-7,-3)$, and $x^{(0)}=(-1,3,1)$, with each
	initial point lying within its corresponding constraint set. The algorithm was
	then executed using the diminishing step-size rule $\alpha_k = 1/k$ and a stopping
	tolerance of $10^{-12}$.

	The computed optimal chain points and hub location are reported in
	Table~\ref{tab:optimal_solution_ex2}, while the convergence behavior of the
	algorithm, including iteration counts and the corresponding CPU times required to
	achieve successive accuracy levels, is summarized in
	Table~\ref{tab:accuracy_cpu_ex2}. Figure~\ref{fig:example2} illustrates the final optimal configuration in $\mathbb{R}^3$,
	showing the constraint sets, the optimal polygonal chain, and the weighted hub
	connections. This example demonstrates the robustness and scalability of the proposed
	method for higher-dimensional and geometrically constrained settings of the
	Generalized Heron--Waist Problem.
	
\end{example}

\begin{table}[H]
	\centering
	\begin{tabular}{cccc}
		\toprule
		\textbf{Variable} & \textbf{$x_1$-coordinate} & \textbf{$x_2$-coordinate} & \textbf{$x_3$-coordinate} \\
		\midrule
		$a_1^*$ & $-1.0000000000$ & $-3.0000000000$ & $ 1.5531128057$ \\
		$a_2^*$ & $ 4.0000000000$ & $ 5.0000000000$ & $ 4.0000000000$ \\
		$a_3^*$ & $ 0.5587959267$ & $ 6.0000000000$ & $ 2.0000000000$ \\
		$a_4^*$ & $-3.0000000000$ & $ 2.5330955112$ & $-2.0000000000$ \\
		$a_5^*$ & $ 5.0000000000$ & $-5.0000000000$ & $-1.0000000000$ \\
		\midrule
		$x^*$   & $-0.0785608029$ & $ 2.3641633583$ & $ 1.1354062574$ \\
		\bottomrule
	\end{tabular}
	\caption{Optimal chain points and hub location for Example~\ref{GHWPEX2} in $\mathbb{R}^3$.}
	\label{tab:optimal_solution_ex2}
\end{table}

\begin{table}[H]
	\centering
	\begin{tabular}{ccc}
		\toprule
		\textbf{Tolerance} & \textbf{Iterations} & \textbf{CPU Time (seconds)} \\
		\midrule
		$10^{-4}$  & 77        & 0.1064 \\
		$10^{-6}$  & 1{,}059   & 0.6394 \\
		$10^{-8}$  & 15{,}343  & 6.0517 \\
		$10^{-10}$ & 228{,}379 & 71.9071 \\
		$10^{-12}$ & 3{,}389{,}834 & 1{,}053.2593 \\
		\bottomrule
	\end{tabular}
	\caption{Convergence accuracy versus iteration count and CPU time for Example~\ref{GHWPEX2} in $\mathbb{R}^3$ using the Projected Subgradient Algorithm.}
	\label{tab:accuracy_cpu_ex2}
\end{table}
\begin{figure}[H]
	\centering
	\includegraphics[width=0.99\textwidth]{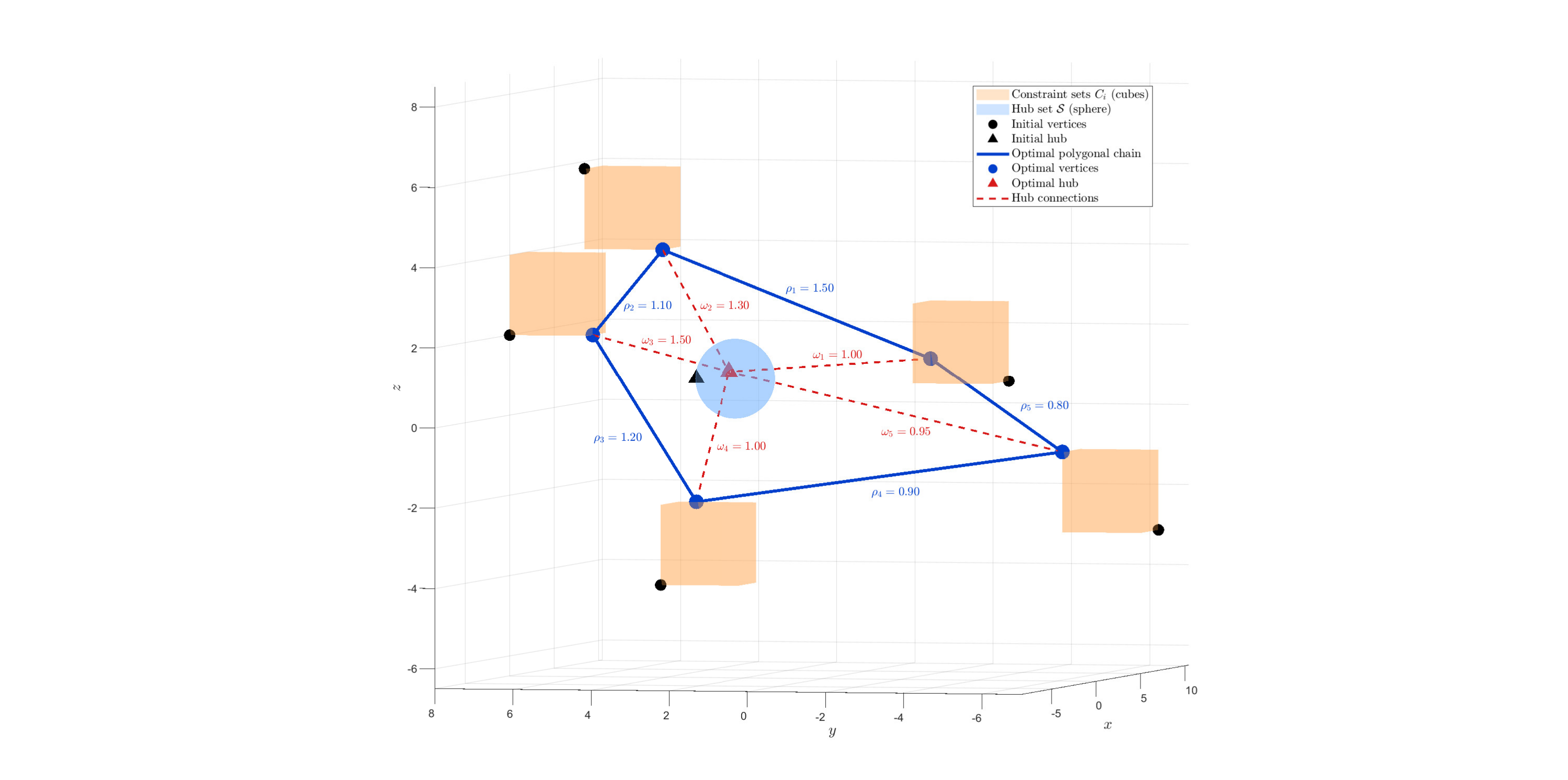}
	\caption{Final optimal configuration for Example~\ref{GHWPEX2} in $\mathbb{R}^3$.}
	\label{fig:example2}
\end{figure}

\section{Conclusion}\label{conclusion}

This paper introduces the Generalized Heron-Waist Problem, extending the classical Heron hub-location and waist perimeter-minimization problems within a single convex optimization framework. By extending point constraints to arbitrary convex sets and incorporating heterogeneous weights, we have created a flexible model for hybrid network systems requiring simultaneous optimization of cyclic connectivity and radial hub access---structures fundamental to supply chains, transportation networks, and communication infrastructures.

We establish complete theoretical foundations through convex analysis: proving existence under natural boundedness assumptions, demonstrating uniqueness when sets are strictly convex with positive weights, and deriving necessary and sufficient optimality conditions as elegant force-balance equilibria. These conditions extend classical geometric principles into modern subdifferential calculus, revealing how weighted directional forces and constraint normals equilibrate at optimal configurations.

Our Projected Subgradient Algorithm employs the problem's convex structure to guarantee convergence under standard step-size rules. Numerical experiments in $\mathbb{R}^2$ and $\mathbb{R}^3$ validate practical effectiveness, achieving high-precision solutions across diverse geometries and weighting schemes within reasonable computational time.

By bridging hub-centric and perimeter-centric optimization paradigms, the GHWP provides both theoretical insights and computational tools for modern network design challenges. Future directions include acceleration techniques, stochastic extensions, and applications to concrete engineering problems, demonstrating how classical geometric optimization continues to illuminate contemporary challenges in facility location and network infrastructure.

\section*{Author Contributions}
All authors contributed equally to this work.

\section*{Conflict of Interest}
The authors declare that there is no conflict of interest.

\section*{Funding}
This research received no external funding.

\bibliographystyle{plain}  
\bibliography{Heron_Waist_Ref}

\end{document}